\documentclass[11pt]{amsart}
\usepackage[utf8]{inputenc}
\usepackage[normalem]{ulem}

\usepackage{amsmath, amssymb, amsthm,color, graphicx, enumitem, upgreek} 
\usepackage[unicode]{hyperref}
\usepackage[artemisia]{textgreek}
\usepackage{epsfig,color,mathrsfs,amsbsy,amsfonts,amsmath,amssymb,amsthm,fullpage}



\definecolor{purple}{rgb}{0.59, 0.44, 0.84}

\definecolor{turk}{rgb}{0.2, .60, .50}



\def\e{\mathbf e}
\def\Q{\mathbb Q}
\def\C{\mathbb C}
\def\sT{\mathscr T}

\def\R{\mathbb R}
\def\Z{\mathbb Z}

\def\SL{\mathrm{SL}}

\def \f{\frac}

\def \bk {\color{black}}
\def \bl#1{ {\textcolor{blue} {#1}}   }

\def \ker \text{Ker}

\def \e {{\bf {e}}}
\def \v {{\bf {v}}}
\def \w {{\bf {w}}}
\def \u {{\bf {u}}}

\newtheorem{theorem}{Theorem}
\newtheorem{prop}{Proposition}
\newtheorem{cor}{Corollary}

\newtheorem{lemma}{Lemma}
\newtheorem{Lemma}{Lemma}

\newcounter{example}
\newenvironment{example}[1][]{\refstepcounter{example}\par\medskip
   \noindent \textbf{Example~\theexample. #1} \rmfamily}{\medskip}
\newcounter{remark}

\newcommand{\mr}[1]{\mathrm{#1}}

\newcommand{\ZZ}{\mathbf{Z}}
\newcommand{\FF}{\mathbf{F}}

\newcommand{\CCC}{\mathbf{C}}

\def\F{\FF}
\def\C{\CCC}

\def\Q{\mathbb Q}
\def\R{\mathbb R}

\def\Z{\mathbb Z}

\def \SL{\rm{SL}}

\def \l {\lambda}

\def\({\left(}
\def\){\right)}
\def \f12{\frac12}
\def\G{\Gamma}
\def \bl{\color{blue}}

\def\PSL{\mbox{PSL}}
\def\SL{\mbox{SL}}
\def\SL2Z{\mr{SL}_(\ZZ)}
\def\SL2R{\mr{SL}_2(\mathbf{R} )}
\def\PSL2R{\mr{PSL}_2(\mathbf{R} )}
\def\PSL2Z{\mr{PSL}_2(\ZZ )}

\catcode`,\active

\catcode`\,12


\catcode`,\active

\catcode`\,12


\catcode`,\active

\catcode`\,12

\catcode`,\active

\catcode`\,12

\catcode`,\active

\catcode`\,12

\def \f{\frac}

\def \bk {\color{black}}
\def \bl#1{ {\textcolor{blue} {#1}}   }

\def \ker \text{Ker}

\def \e {{\bf {e}}}
\def \v {{\bf {v}}}
\def \w {{\bf {w}}}
\def \u {{\bf {u}}}

\begin{document}
\title{Hodge numbers of hypergeometric data}

\author{Ling Long}
\address{Department of Mathematics, Louisiana State University, Baton Rouge, LA 70803-4918, USA}
\email{llong@lsu.edu}

\author{Yifan Yang}
\address{Department of Mathematics, National Taiwan University and National Center for Theoretical Sciences, Taipei, Taiwan 10617}
\email{yangyifan@ntu.edu.tw}

\begin{abstract}
    In this paper, based on the toric hypergeometric model given in a paper by Beukers--Cohen--Mellit, we provide two other ways to explain why the zig-zag diagram method can be used to compute Hodge numbers for hypergeometric data defined over $\Q$.
\end{abstract}
\maketitle
\section{Introduction}
Let $HD=\{\alpha=\{a_1,a_2,\cdots,a_m\}, \beta=\{1,b_2,\cdots,b_m\}\}$ with $a_i, b_j \in \Q\cap (0,1]$ be a  hypergeometric datum   defined over $\Q$, which means $\prod_{i=1}^m (X-e^{2\pi i a_i})\in\Z[X]$ and $\prod_{i=1}^m (X-e^{2\pi i b_i})\in\Z[X]$.
The purpose of this note is to compute the Hodge-Tate weights arising from the generic  newton polygon of $HD$  in terms of combinatorics via the Hodge numbers of toric varieties. They form a set of fundamental invariants when we consider the hypergeometric Galois representations arising from $HD$, as in \cite{Allen2024explicit} and \cite{LLT2}. Classically, each Hecke eigenform have two important features: level and weight, say $k$. The latter one means the Hodge-Tate weights of the corresponding Galois representation  are $\{0,k-1\}$. \medskip

Write $$\frac{\prod_{i=1}^m(X-e^{2\pi i a_i})}{\prod_{i=1}^m(X-e^{2\pi i b_i})}=\frac{(X^{p_1}-1)\cdots (X^{p_r}-1)}{(X^{q_1}-1)\cdots (X^{q_s}-1)}$$ where $p_i,q_j\in \mathbb N$, $\{p_i\}\cap\{q_j\}=\emptyset$, and \begin{equation}\label{eq:L}
  \sum_{i=1}^r p_i=\sum_{j=1}^s q_j:=L(HD).  
\end{equation} We call $L(HD)$ the \emph{natural length} of $HD$.
We  also refer to \begin{equation}\label{eq:gamma-v}
   \G(HD):=[p_1,\cdots,p_r,-q_1,\cdots,-q_s] 
\end{equation} as the \emph{hypergeometric gamma vector} of $HD$. Here we assume $q_s=1$. Further we let $n=r+s-1$ and $$M(HD):=\text{lcd}(\alpha\cup \beta)=\text{lcm} (\{p_i,q_j\}),$$ be the \emph{level} of $HD$, where $\text{lcd}$ means least positive common denominator. Let $\bf o$ be the origin of  $\Z^n$ and  fix a set of vectors in $\Z^n$:
\begin{equation}\label{eq:ei}
    \begin{array}{rl}
    \e_0=&[1,0,\cdots,0]\\
    \e_i=&[1,0,\cdots, 0, \underset{i+1}1,0,\cdots,0], \quad 1\le i\le n-1\\
    \e_n=&[1,p_1,\cdots,p_{r-1},-q_1,\cdots,-q_{s-1}]
\end{array}
\end{equation}
Let $\Delta(HD)$ be the convex polyhedron with vertices  $\{{\bf o}\}\cup \{\e_i\}_{i=0}^n$ and $\text{Vol}(\Delta(HD))$ be its volume. Its Hodge numbers  are non-negative integers. 
Let $H_{\Delta(HD)}(k)$ be the multiplicity of $k$ among these Hodge numbers. 
They form a set of useful invariants when we study the toric varieties parameterized by $HD$ as follows.  
Given a gamma vector $\G(HD)$ as above, $\l\in \C$, there is a toric variety given by (1.1) of \cite{BCM} by Beukers, Cohen and Mellit of the form
$$\quad  x_1+\cdots +x_r-y_1-\cdots -y_s=0, \quad \frac{\lambda}C x_1^{p_1}\cdots x_r^{p_r}=y_1^{q_1}\cdots y_s ^{q_s}, \quad C=\frac{\prod_{i=1}^rp_i^{p_i}}{\prod_{i=1}^sq_i^{q_i}}.
$$ 
Following  \cite{BCM}, we consider the inhomogeneous model (note that $q_s=1$) by letting 
$$x_r=1, \quad y_s=\frac\lambda C
\frac{x_1^{p_1}\cdots x_{r-1}^{p_{r-1}}}
{y_1^{q_1}\cdots y_{s-1} ^{q_{s-1}}}
$$
and hence the above variety is reduced to
 \begin{equation}\label{f_HD}f_{HD(\l)}= 1 +x_1+\cdots +x_{r-1}-y_1-\cdots -y_{s-1}- \frac{\l}C \frac{x_1^{p_1}\cdots x_{r-1}^{p_{r-1}}}{y_1^{q_1}\cdots y_{s-1} ^{q_{s-1}}}=0.\end{equation} 

 \begin{example}[{$\G(HD)=[3,-1,-1,-1]$}]\label{eg:1}
In this case, $C=27$, for any $\l\neq 0,1$ $f_{HD}(\l)=\displaystyle 1-y_1-y_2-\frac{\l}{27y_1y_2}=0$ is an elliptic curve. 
\end{example}

 Given a fixed finite field $\F_q$ whose characteristic is assumed to be coprime to $M(HD)$ throughout this paper and let $\l\in \F_q$. To compute the number of solutions of $f_{HD}(\l)=0$  in $\mathbb G_m^{r+s-2}(\F_q):=(\F_q^\times)^{r+s-2}$, one can make use of any nontrivial additive character $\psi$ of $\F_q$ by computing
 \begin{equation}\label{eq:1/q}
     \frac 1q \sum_{\substack{x_0\in\F_q\\ \,\, (x_1,\cdots,y_{s-1}) \in(\F_q^\times)^{r+s-2}}} \psi\left (x_0f_{HD}\right),
 \end{equation}plus contributions from the boundary. 
 See \cite{BCM} for a chosen compactification. Below we regard $x_0f_{HD}(\l)$  as a Laurent polynomial in $n=r+s-1$ variables $x_0,x_1,\cdots, x_{r-1},y_1,\cdots, y_{s-1}$. Belowe we treat it as a vector ${\bold x}$.   Using the trace  map, $\psi$ can be extended to an additive character $\psi_r$ for any degree-$r$ extension $\F_{q^r}$ of $\F_q$. 
 
To continue we first recall some more background information on exponential sums.
Assume that $f(\bold x)$ is a Laurent polynomial in $n$ variables with coefficients in $\F_q$.   Let
 \begin{equation}
     S_r(f;\F_q):=\sum _{\bold x \in \mathbb G_{m}^n (\F_{q^r})}\psi_r(f(\bold x))
 \end{equation}
 Then by Dwork 
 $$Z_q(f;T):=\displaystyle \exp\left((-1)^{n-1}\sum_{r\ge 1} S_r(f;\F_q) \frac{T^r}r\right)
 $$
 is a rational function of $T$. A fundamental result of   Adolphson and Sperber  \cite{Adolphson-Sperber89} says when $f$ is non-degenerate (see Definition 1.1 of \cite{Wan-Newton}),  $Z_q(f;T)$ is a polynomial  in $\Z[T]$ with constant 1, whose degree is $\operatorname{Vol} \Delta_f \cdot n!$ where $\Delta_f$ is a convex polyhedron in $\Q^n$
 whose vertices are $\mathbf{o}$ together with the exponent vectors corresponding to the monomials of $f$ with nonzero coefficients.  
 
Note that for $f=x_0f_{HD}$ the vertex-vectors coincide with $\e_0,\e_1,\cdots, \e_{n}$.
For $\l\neq 0,1$ in $\F_q$, the polynomial $x_0f_{HD(\l)}$  is non-degenerate.  By induction on $n$, the corresponding Euler polynomial has degree \begin{equation}\operatorname{Vol}\Delta_{HD} \cdot n!=L(HD)=\sum_{j=1}^rp_j.
\end{equation}
Let $u_1(\l,p),\cdots, u_{L(HD)}(\l,p)\in \Z_p$ be the list of reciprocal roots of the Euler polynomial in $\overline \Q_p$, the algebraic closure of $\Q_p$. 
Let $r_i(\l,p)=\text{ord}_p u_i(\l,p)$. A priori, these are non-negative rational numbers. For $x_0f_{HD}$, they are integers. 
From these $r_i(\l,p)$ one can compute the Newton Polygon in a way analogous to Definition 1.2 of \cite{Wan-Newton}. 
It was conjectured by Wan in \cite{Wan-Newton} that for generic choice of $\l$ and large prime $p$, the Newton polygon and the Hodge polygon agree. This conjecture was proved by Blache in \cite{Blache11}. This means the Hodge numbers that can be computed combinatorially also encode arithmetic information of the exponential sums. They are also called Hodge-Tate weights of the  datum $HD$.

\begin{example}[${\G=[3,-1,-1,-1]}, \l=2$]
$$Z_p(f_{HD}(2);T)=(1-T)(1-a_p(f_{54.2.a.b}){ p}T+p^{{2}+1}T^2), \quad \forall p>3$$ where $a_p$ denotes the $p$th coefficient, $f_{54.2.a.b}$ is a weight-2 level 54 modular form in its L-Functions and Modular Forms Database label. Thus for a generic $p$, its reciprocal roots in $\C_p$ have $p$-adic orders $ \{0,1,2\}$.
\end{example}

In the literature of hypergeometric functions and motives, there is an explicit way to compute the Hodge numbers using the so-called zig-zag diagram (see \cite{Corti-Golyshev11} by Corti and Golyshev and \cite{RRV22} by Roberts and Rodriguez-Villegas). These Hodge numbers are useful for the determination of the weights of the modular forms according to Galois representations arising from hypergeometric motives, see for example \cite{Allen2024explicit,LLT2} for some applications. In this paper we give two more ways to   explain why:
\begin{quote}
 \emph{The zig-zag diagram algorithm will produce the the Hodge multiplicity and hence Newton multiplicity numbers for $x_0f_{HD}$.} \end{quote} 
 Our approaches below are different from \cite{Corti-Golyshev11}. They are combinatorial and explicit.  

 \medskip

 The paper will be organized in the following manner. The main statement is given in \S \ref{ss:2}, the relation between the zig-zag diagrams and $p$-adic orders of the hypergeometric coefficients will be recalled in \S\ref{ss:3}. In \S \ref{ss:4} and \S \ref{ss:another}, we give two different proofs of the main statement.
 
\section{Main statement}\label{ss:2}
Given a hypergeometric gamma vector $\G=[p_1,\cdots, p_r,-q_1,\dots,-q_s]$, let 
\begin{equation}\label{eq:HD-G}
HD_{\G}=\left \{\alpha=\left\{\bigcup_{j=1}^r \left \{ \frac{i}{p_j} \right \}_{j=1}^{p_i}\right\}, 
\quad \beta=\left\{\bigcup_{j=1}^s  \left \{ \frac{i}{q_j} \right \}_{i=1}^{q_j}\right\} 
\right\}.    
\end{equation}Each multi-set has length $L:=L(HD)=\sum_{i=1}^r p_i$. By construction,  the multiplicity $n_{b,\beta}$ of any element $b=\frac{i}{q_j}<1$ in $\beta$ can be computed by
\begin{equation}\label{eq:nb}
    n_{b,\beta}=\#\{q_k\mid 1\le k\le s,\, bq_k\in\Z\}.
\end{equation}

\medskip
Sort the combined elements in both $\alpha$ and $\beta$ into a new list satisfying 
\begin{equation}\label{eq:mu}
    \mu_1\le\mu_2\le \cdots \le \mu_{2L},
\end{equation}where  $\mu_{2L}=1$ with the following convention: 
\begin{quote}
    if  $a\in \alpha\cap \beta$, then \emph{those copies of $a$'s in $\beta$ all go before those $a$'s in $\alpha$}. 
\end{quote}  Further let $\mu_0=0$.
\medskip

Given $\G$, we define an arithmetic function $\Phi$ recursively as follows:
\begin{equation}\label{eq:Phi}
  \Phi(0)=  \Phi_{HD}(0)=r(\G(HD)), \quad \Phi_{HD}=\Phi_{HD}(i)=\begin{cases}\Phi_{HD}(i-1)+1,& \text{ if } \quad \mu_i\in \alpha;\\ \Phi_{HD}(i-1)-1,& \text{ if } \quad \mu_i\in \beta.
\end{cases}
\end{equation}

\begin{theorem}\label{thm:main}
Let 
\begin{equation}\label{eq:HD-z} 
HT(HD):=\{\Phi(i)\mid \text{ if } \mu_i\in\beta\}. 
\end{equation} They are the Hodge numbers of $HD$.
\end{theorem}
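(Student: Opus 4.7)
The plan is to prove Theorem \ref{thm:main} by matching the multi-set $HT(HD)$ to the Newton polygon slopes of $x_0f_{HD(\lambda)}$ at generic $\lambda$ and $p$, via two complementary perspectives that \S\ref{ss:4} and \S\ref{ss:another} can develop.

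The first perspective transports the statement through $p$-adic valuations of hypergeometric coefficients. By Wan's conjecture (proved by Blache), the generic Newton polygon of the Euler polynomial of $x_0f_{HD(\lambda)}$ equals the Hodge polygon of $\Delta(HD)$, so it suffices to compute the $p$-adic valuations of the reciprocal roots of $Z_q(x_0f_{HD(\lambda)};T)$ for generic $\lambda$. Dwork's trace formula expresses these reciprocal roots as eigenvalues of a Frobenius operator on a $p$-adic Banach space, and using the Beukers--Cohen--Mellit toric model, the matrix entries of that Frobenius are truncated hypergeometric series governed by the gamma vector $\G(HD)$. The classical relation, to be recalled in \S\ref{ss:3}, between the $p$-adic valuations of such hypergeometric coefficients and the zig-zag function $\Phi$ then shows that the slopes are exactly $\{\Phi(i):\mu_i\in\beta\}$: each $\beta$-position in the sorted list $\mu_1\le\cdots\le\mu_{2L}$ contributes one Frobenius eigenvalue whose valuation equals the running height $\Phi(i)$ recorded just after the corresponding downward step.

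The second perspective is directly combinatorial via Adolphson--Sperber. The Hodge numbers of $\Delta(HD)$ are the weights of the lattice points in the fundamental parallelepiped $\Pi$ of the cone $C(\Delta(HD))\subset\R^{n+1}$ generated by $(1,\mathbf o)$ together with $(1,\e_i)$ for $0\le i\le n$, the weight of $\mathbf u\in\Pi$ being its first coordinate. Using the single affine relation
$$\e_n=\e_0+\sum_{i=1}^{r-1}p_i(\e_i-\e_0)-\sum_{j=1}^{s-1}q_j(\e_{r-1+j}-\e_0)$$
among the $\e_i$, one parameterizes $\Pi\cap\Z^{n+1}$ by residues $\mathbf c\in[0,1)^{n+2}\cap M(HD)^{-1}\Z^{n+2}$ subject to a single integrality condition. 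Each $\beta$-element $b=i/q_j$ produces a distinguished representative in $\Pi$ whose weight one verifies to equal
$$r+\#\{a\in\alpha:a\le b\}-\#\{b'\in\beta:b'\preceq b\}=\Phi(i),$$
where $\preceq$ encodes the tie-breaking convention (\ref{eq:mu}) on common elements of $\alpha\cap\beta$; conversely every lattice point in $\Pi$ arises this way.

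The main obstacle in either route is the careful bookkeeping at coincidences $a\in\alpha\cap\beta$. Each cancellation in the ratio $\prod(X-e^{2\pi ia_i})/\prod(X-e^{2\pi ib_i})$ must be mirrored either by a cancellation in the lattice-point parameterization of $\Pi$ or, on the arithmetic side, by a coincidence in the Frobenius matrix entries, and the convention that places $\beta$-copies before $\alpha$-copies is dictated precisely by how these cancellations manifest. Establishing the bijection rigorously should proceed either by induction on $L(HD)$, peeling off one $p_i$ or one $q_j$ at a time, or via a generating-function identity equating the $h^*$-polynomial of $\Delta(HD)$ with $\sum_{i:\mu_i\in\beta}t^{\Phi(i)}$.
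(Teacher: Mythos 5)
Your second perspective points in the right direction, but both routes as written have gaps. The first route (Wan--Blache plus Dwork's trace formula) misreads what the theorem asserts: $H_{\Delta(HD)}(k)$ is \emph{defined} combinatorially by the inclusion--exclusion formula \eqref{eq:m} applied to lattice-point counts $w_\Delta(k)$ in the cone, so the statement to be proved is purely about lattice points in $C(\Delta(HD))$; no Frobenius eigenvalues need to be computed, and routing through the generic Newton polygon would force you to prove a much harder arithmetic theorem (that the slopes of the hypergeometric Euler factor are exactly $\{\Phi(i)\}$) which you only assert. The paper does use $p$-adic valuations of the Pochhammer ratios $A_{HD}(k)$, but only as elementary bookkeeping (\S\ref{ss:3}, Lemma \ref{lem:1.1}) to convert the fractional-part expression $\sum_j\{-\mu p_j\}-\sum_j\{-\mu q_j\}$ into the zig-zag count $\Phi$; there is no Dwork theory in the proof.

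The genuine gap in your combinatorial route is that $C(\Delta(HD))$ is \emph{not simplicial}: it is generated by the $n+1$ vectors $\e_0,\dots,\e_n$ in $\R^n$ subject to the single relation \eqref{eq: relation}, so there is no single fundamental parallelepiped whose lattice points give the $h^*$-coefficients, and your "distinguished representatives subject to one integrality condition" cannot be set up as stated. This is precisely the point where the paper has to work: in \S\ref{ss:4} it constructs the shifted apexes $\w_0,\dots,\w_L$ and proves the disjoint decomposition $G=\bigsqcup_i\bigl(\w_i+C(\{\e_0,\dots,\check\e_{s(i)},\dots,\e_n\},\Z_{\ge0})\bigr)$ of Proposition \ref{prop:partition}, after which Chu--Vandermonde collapses \eqref{eq:m} to $H_{\Delta(HD)}(k)=\#\{i:wt(\w_i)=k\}$; alternatively, \S\ref{ss:another} runs an inclusion--exclusion over the simplicial facial subcones $C(\Delta_T)$, computes each $g(x;T)=f(x;T)/(1-x)^{|T|}$ from genuinely simplicial fundamental domains, and resums to get $f(x)=\sum_{t\in\sT}x^{w(t)}(1+\cdots+x^{m(t)-1})$. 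Your closing formula $w(t)=r+\#\{a\in\alpha:a<t\}-\#\{b\in\beta:b\le t\}$ does match \eqref{eq: w(t) 1} and correctly encodes the tie-breaking convention, so the endgame of your plan is sound; what is missing is the decomposition (either the $L+1$ translated simplicial cones or the alternating sum over subsets $T$ of $S_+$) that justifies reading the coefficients of $(1-x)^n g(x)$ off a finite list of lattice points in the first place. "Induction on $L(HD)$, peeling off one $p_i$ or $q_j$ at a time" is not obviously available, since removing a $p_i$ changes the ambient dimension and the polytope non-trivially.
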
 
Equivalently,
\begin{theorem}\label{thm:main}
Given a hypergeometric vector $\G$ as \eqref{eq:gamma-v}, let $HD=HD_\G$ as in \eqref{eq:HD-G}. Then the Hodge numbers $H_{\Delta(HD)}(k)$ of the convex polyhedron $\Delta(HD)$ (to be defined in \eqref{eq:m}) satisfy that 
\begin{equation}\label{eq:H-w}
    H_{\Delta(HD)}(k)= 
        \#\{w\in HT(HD)\mid w=k\}.
\end{equation}
\end{theorem}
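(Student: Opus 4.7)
My plan is to compute $H_{\Delta(HD)}(k)$ via the $h^{*}$-polynomial of $\Delta(HD)$ and then match it with the zigzag generating function. By the Adolphson--Sperber formula, refined for the generic Newton polygon by Wan--Blache as recalled in the introduction, the Hodge multiplicities are the coefficients of
\[
h^{*}_{\Delta(HD)}(T) = (1-T)^{n+1}\sum_{k\ge 0}|k\,\Delta(HD)\cap\mathbb{Z}^n|\,T^k.
\]
Since $\Delta(HD)$ is the pyramid with apex $\mathbf{o}$ over the base $B = \mathrm{conv}(\mathbf{e}_0,\ldots,\mathbf{e}_n)$, a short manipulation of Ehrhart series gives $h^{*}_{\Delta(HD)} = h^{*}_{B}$, so it suffices to compute the $h^{*}$-polynomial of the $(n-1)$-dimensional circuit polytope $B$.

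To compute $h^{*}_{B}$, I would exploit the single affine relation
\[
p_r\mathbf{e}_0 + \sum_{i=1}^{r-1}p_i\mathbf{e}_i = \sum_{j=1}^{s-1}q_j\mathbf{e}_{r+j-1} + \mathbf{e}_n
\]
among its vertices, whose coefficient vector is exactly $\Gamma(HD)$. Using this circuit to triangulate $B$ into simplices, the $h^{*}$-polynomial of each simplex is read off from the lattice points of the fundamental parallelepiped of the associated simplicial cone, graded by the first-coordinate weight --- which coincides with the Adolphson--Sperber weight because every $\mathbf{e}_i$ lies on $x_0=1$. Inclusion--exclusion across the shared faces of the triangulation then yields a closed-form expression for $h^{*}_{B}$ in terms of the $p_i$'s, $q_j$'s, and the multiplicities $n_{b,\beta}$.

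I would match this closed form with the zigzag generating function $Z(T)=\sum_{i:\mu_i\in\beta}T^{\Phi(i)}$ by computing $Z(T)$ analogously. Each value $b\in\beta$ occurring with multiplicity $n_{b,\beta}$ contributes
\[
T^{\Phi(i_b-1)-n_{b,\beta}}\,(1+T+\cdots+T^{n_{b,\beta}-1}),
\]
where $\Phi(i_b-1)=r+\#\{a\in\alpha:a<b\}-\#\{b'\in\beta:b'<b\}$ by the $\beta$-before-$\alpha$ convention. Summing over the distinct values of $b$, the resulting product-like expression should match the triangulation formula for $h^{*}_{B}$ term by term.

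The main obstacle is the combinatorial bookkeeping when $B$ is non-simplicial: the triangulation and inclusion--exclusion require careful tracking of the circuit relation, and the $\beta$-before-$\alpha$ convention must be reconciled with the half-open fundamental parallelepiped conventions of the Stanley decomposition at shared values $\alpha\cap\beta$. In small cases --- for instance $\Gamma=[3,-1,-1,-1]$, where both sides give $1+T+T^{2}$, or $\Gamma=[2,2,-1,-1,-1,-1]$, where both equal $(1+T)(1+T^{2})$ --- one verifies the identity directly; generalizing to arbitrary $\Gamma$ by unfolding these combinatorial patterns completes the proof of (\ref{eq:H-w}).
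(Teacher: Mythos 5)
Your outline reproduces, essentially step for step, the paper's second proof in \S\ref{ss:another}: the circuit relation \eqref{eq: relation} is used there to decompose $\Z^n\cap C(\Delta)$ into the simplicial cones $C(\Delta_{T\cup S_-})$ with inclusion--exclusion over shared faces, the fundamental-parallelepiped count you describe is Lemma~\ref{lemma: f(x;T)} giving $f(x;T)=\sum_{c=0}^{d-1}x^{w(c/d)}$, and the rearrangement \eqref{eq: Hodge polynomial 1} together with the floor-function evaluation \eqref{eq: w(t) 1} of $w(t)$ supplies exactly the ``term-by-term match'' with the zigzag values $\Phi$ that your sketch leaves as an assertion. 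The one small reconciliation you will need: pushing along the $S_+$ side of the circuit indexes the box points by elements of $\alpha$ (with multiplicities $n_{t,\alpha}$), not by $b\in\beta$ as in your sum, so either use the companion triangulation in the second line of \eqref{eq: Hodge polynomial} or observe that for the closed walk $\Phi$ the multiset of values after down-steps equals the multiset of values before up-steps.
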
 
Removing overlapping elements from both $\alpha,\beta$ with the same multiplicity will result in a \emph{derived primitive hypergeometric datum}, denoted by $HD^{\text{red}}$. The zig-zag diagram for $HD^{\text{red}}$ can be obtained by concatenating the  ``V" ditches caused by elements in $\alpha\cap \beta$. Typically, we further normalize the Hodge numbers of $HD^{\text{red}}$ so that the minimal Hodge number becomes 0.

\begin{example}Let $\G=[3,-1,-1,-1]$, then
$HD_\G=\{\alpha=\{\frac13,\frac23,1\},\beta=\{{\color{blue}1,1,1}\}\}.$ The sorted list is
$$\frac13\le \frac23\le {\bl 1} \le {\bl 1}\le {\bl 1}\le 1 $$
\begin{center}
      \includegraphics[height=4cm]{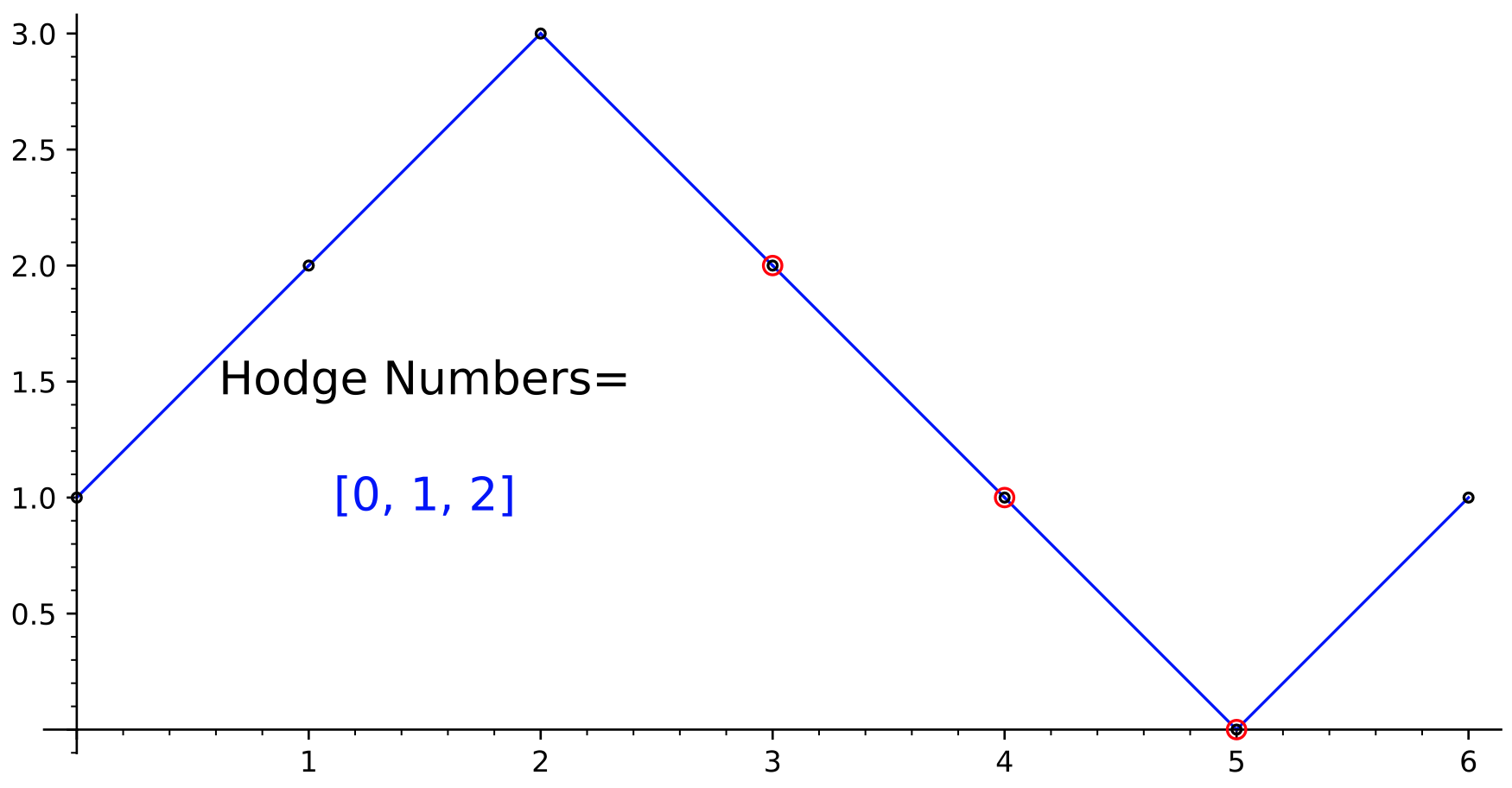}
\end{center} 

   $HD^{\text{red}}=\{\{\frac13,\frac23\},\{1,1\}\}$.

\begin{center}
    \includegraphics[height=4cm]{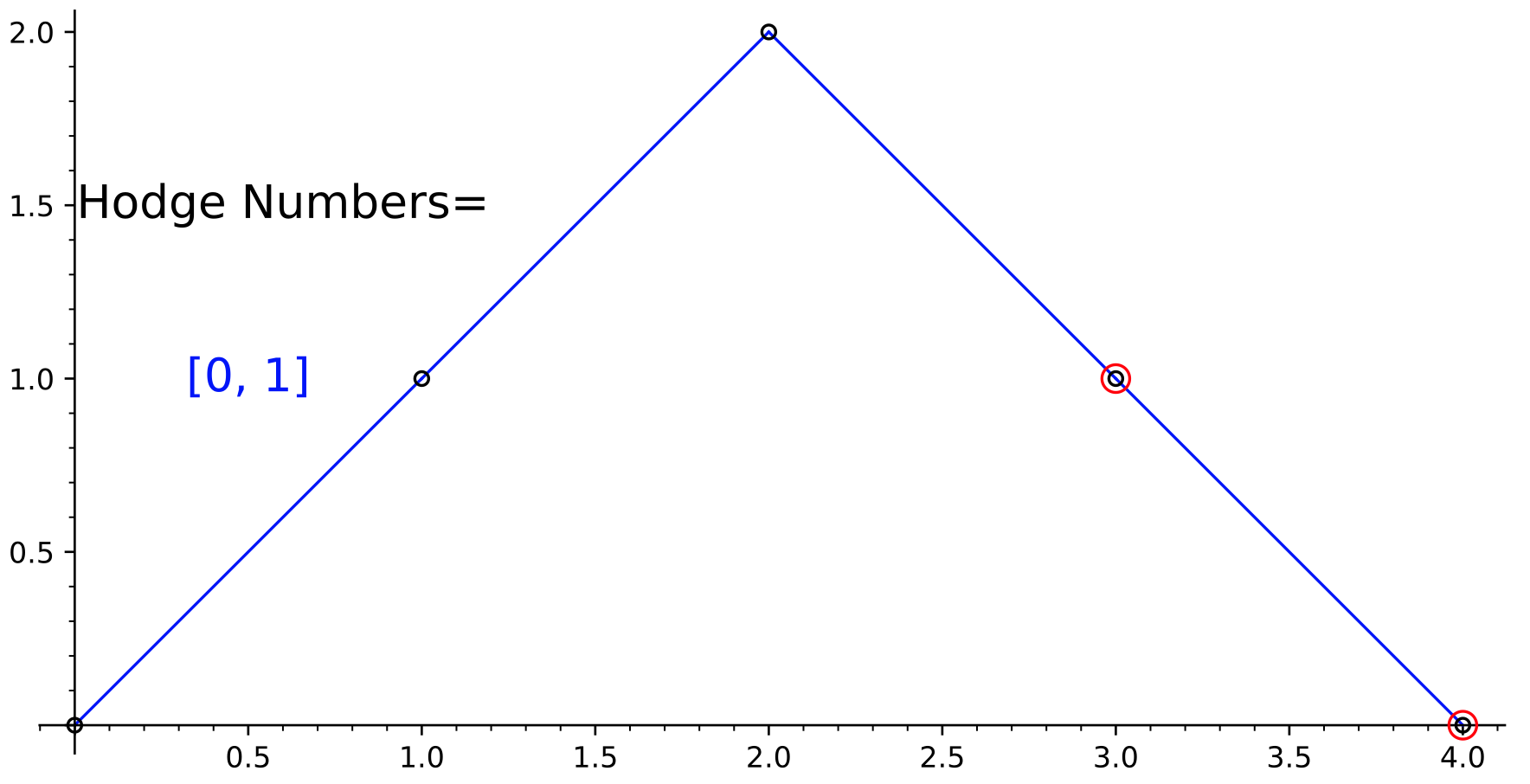}
\end{center}
\end{example}

\section{Zig-zag diagrams and $p$-adic orders}\label{ss:3}
The zig-zag diagram is related to the  $p$-adic order of  
\begin{equation}\label{eq:AHD(k)}\displaystyle
    A_{HD}(k):=\frac{\prod_{a\in\alpha}(a)_k}{\prod_{b\in\beta}(b)_k},
\quad  \text{where} \quad (a)_k:=\frac{\G(a+k)}{\G(a)}=a(a+1)\cdots(a+k-1).\end{equation} It is related to the discussion in \cite{Long18}. Here we recast the idea.  For $a\in \Z_p$,  use $[a]_0$ to denote the first  $p$-adic digit of $a$. If $a=\frac cd$ where $c,d\in \mathbb N, 1\le c<d<p$, and $p\equiv 1\mod d$  then 
\begin{equation}\label{eq:0-digit}
   \left [-\frac{c}{d}\right]_0=\frac{cp- c}d=\frac cd(p-1). 
\end{equation}
 For $a\in (0,1]$ and $1\le k<p$,    \begin{equation}\label{eq:(a)k-porder}
     \text{ord}_p(a)_k=-\left \lfloor a- \frac{ k}{p-1}\right \rfloor=\begin{cases}
    0 & \text{if } 1\le k\le  [-a]_0;\\
    1& \text{if }  [-a]_0< k<p.
\end{cases}
 \end{equation}
Let $p\equiv 1\mod M(HD)$ be a fixed  prime throughout this paper.    It follows that 
\begin{equation}
 0\le   [-\mu_1]_0\le [-\mu_2]_0\le \cdots \le [-\mu_{2L}]_0\le [-1]_0=p-1.
\end{equation}
When $k$ goes from 0 to $p-1$, $\text{ord}_pA_{HD}(k)$ is a step function  starting from 0, and when $k$ passes through  $[-\mu_i]_0$, its value   will change by $n_{\mu_i,\alpha}-n_{\mu_i,\beta}$ where  $n_{\mu_i, \alpha}$ (resp. $n_{\mu_i, \beta}$) denotes the multiplicity of $\mu_i$ in $\alpha$ (resp. $\beta$).  We summarize the above discussion into the next Proposition.

\begin{prop}\label{prop:1}
For a given datum $HD=\{\alpha=\{a_1,\cdots, a_L\},\beta=\{b_1,\cdots,b_L\}\}$   defined over $\Q$ where $a_i,b_j\in (0,1]$, $b_1=1$, whose Gamma vector is $\G(HD)$, let $\mu_1\le \cdots \le \mu_{2L}=1$ be the sorted combined list. For any prime $p\equiv 1\mod M(HD)$ 
\begin{equation}\label{eq:Phi-A}
\Phi_{HD}(i)=\text{ord}_p A_{HD}(k)+r(\G(HD)), \quad \text{if}\quad  [-\mu_i]_0< k \le [-\mu_{i+1}]_0.
\end{equation}
In particular, for each $i$ such that   $\mu_{i}<\mu_{i+1}$, by \eqref{eq:0-digit} and \eqref{eq:Phi-A},
\begin{equation}\label{eq:ordpA-Phi}
\Phi_{HD}(i)=\text{ord}_p A_{HD}(\mu_{i+1} (p-1))+r(\G(HD)).
\end{equation}
\end{prop}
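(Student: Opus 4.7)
\emph{Proof plan.} My approach is to compute $\text{ord}_p A_{HD}(k)$ explicitly as a step function of $k\in\{1,\dots,p-1\}$ using \eqref{eq:(a)k-porder} and match the resulting staircase against the recursive definition \eqref{eq:Phi} of $\Phi_{HD}$. First I would expand
$$\text{ord}_p A_{HD}(k)=\sum_{a\in\alpha}\text{ord}_p(a)_k-\sum_{b\in\beta}\text{ord}_p(b)_k,$$
and invoke \eqref{eq:(a)k-porder} to view each summand as a $\{0,1\}$-valued function of $k$ that jumps from $0$ to $1$ precisely as $k$ crosses the threshold $[-a]_0$. Since $p\equiv 1\pmod{M(HD)}$, formula \eqref{eq:0-digit} gives $[-a]_0=a(p-1)$ for every $a\in\alpha\cup\beta$, so the sorting $\mu_1\le\cdots\le\mu_{2L}$ is preserved by $\mu\mapsto[-\mu]_0$ and distinct $\mu$-values produce distinct threshold points. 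Grouping by value, the combined step function $\text{ord}_p A_{HD}(k)$ therefore experiences a net jump of $n_{a,\alpha}-n_{a,\beta}$ as $k$ crosses $[-a]_0$, and is constant on the intervals between consecutive thresholds.

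For the base case, if $0<k\le[-\mu_1]_0$ then $k\le[-a]_0$ for every $a\in\alpha\cup\beta$ by the ordering, so every $\text{ord}_p(a)_k$ vanishes and $\text{ord}_pA_{HD}(k)=0$; this matches $\Phi_{HD}(0)=r(\G(HD))$ from \eqref{eq:Phi}. For the inductive step, I would compare cumulative changes on the two sides. Within the block of consecutive indices $i$ sharing a common value $\mu_i=a$, the recursion \eqref{eq:Phi} applies the $n_{a,\beta}$ decrements first and then the $n_{a,\alpha}$ increments (by the ``$\beta$-before-$\alpha$'' convention), producing a net change of $n_{a,\alpha}-n_{a,\beta}$ over the whole block, with $\Phi_{HD}$ constant between blocks. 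Hence at an end-of-block index $i$ (equivalently, one with $\mu_i<\mu_{i+1}$) the cumulative change of $\Phi_{HD}$ from $i=0$ matches the cumulative jump of $\text{ord}_pA_{HD}$ through the thresholds $[-\mu_j]_0$ with $\mu_j\le\mu_i$; since these are exactly the indices for which the interval $[-\mu_i]_0<k\le[-\mu_{i+1}]_0$ is nonempty, this establishes \eqref{eq:Phi-A}, and the ``in particular'' statement then follows by specializing $k=\mu_{i+1}(p-1)=[-\mu_{i+1}]_0$ via \eqref{eq:0-digit}.

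The main, if mild, obstacle is the bookkeeping caused by $a\in\alpha\cap\beta$: inside such a block the recursion \eqref{eq:Phi} is not monotone, and an interior value of $\Phi_{HD}$ need not equal the locally constant value of $\text{ord}_pA_{HD}+r(\G(HD))$. However, the proposition only asserts equality at end-of-block indices, where the ordering convention between $\alpha$-copies and $\beta$-copies is washed out by the full block sum, so this subtlety is absorbed into the nonempty-interval hypothesis rather than into the proof itself.
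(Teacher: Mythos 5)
Your proposal is correct and follows essentially the same route as the paper: the paper's ``proof'' is precisely the discussion preceding the proposition, which uses \eqref{eq:(a)k-porder} to view $\text{ord}_pA_{HD}(k)$ as a step function jumping by $n_{\mu_i,\alpha}-n_{\mu_i,\beta}$ at each threshold $[-\mu_i]_0=\mu_i(p-1)$ and matches this against the recursion \eqref{eq:Phi}. Your explicit handling of the base case and of the $\alpha\cap\beta$ blocks (where the interval condition is vacuous at interior indices) just makes the paper's summary argument more precise.
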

\smallskip

Next we will consider the first term on the right hand side of \eqref{eq:Phi-A} by recalling some useful facts.
For $r\in \mathbb R$, let $\{r\}:=r-\lfloor r\rfloor$. Then
\begin{equation}\label{eq:fracsum}
    \{r\}+\{-r\}=\begin{cases}
        0& r\in \Z;\\
        1& r\notin \Z.
    \end{cases}
\end{equation} 
Also (from the multiplication formula of Gauss sums and the Gross-Koblitz formula), for any  $m\in \Z_{>0}$   $$\{ rm\}+\sum_{i=1}^{m} \left\{ \frac im\right\}=\sum_{i=1}^{m} \left\{ r+\frac i m \right \}. $$  Equivalently,
\begin{equation}\label{eq:fr}
    \{ rm\}=\sum_{i=1}^{m} \left\{ r+\frac i m \right \}-\frac{m-1}2.
\end{equation}

For $1\le k\le  p-1,$ from \eqref{eq:(a)k-porder}
\begin{equation}
    \label{eq:ordpA}
\text{ord}_p A_{HD}(k)=\sum_{i=1}^L -\left \lfloor a_i-\frac{k}{p-1} \right \rfloor + \left \lfloor b_i-\frac{k}{p-1} \right \rfloor.
\end{equation}
In particular, $\text{ord}_p A_{HD}(p-1)=s-r$.

\begin{lemma}\label{lem:1.1}
Given a Gamma vector $\G$, let $HD_\G$ be as in \eqref{eq:HD-G}. For  $p\equiv 1\mod M(HD_\G)$,  $\mu \in \alpha\cup \beta,$ 
 \begin{equation}\label{eq:p-ord-mu(p-1)}
   \text{ord}_p A_{HD_\G}(\mu (p-1))= s -r+\sum_{i=1}^{r}   \{-\mu p_{i}\}- \sum_{j=1}^{s} \{-\mu q_j\}.
 \end{equation}
\end{lemma}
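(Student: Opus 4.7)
The plan is to start from the explicit formula \eqref{eq:ordpA} for $\text{ord}_p A_{HD}(k)$, substitute $k=\mu(p-1)$ so the floor terms become $\lfloor a-\mu\rfloor$ and $\lfloor b-\mu\rfloor$, and then convert floors into fractional parts via $\lfloor x\rfloor = x - \{x\}$. This splits the expression into a ``linear part'' (only involving sums of $a\in\alpha$ and $b\in\beta$, with all $\mu$'s cancelling because $|\alpha|=|\beta|=L$) and a ``fractional part'' involving $\sum_{a\in\alpha}\{a-\mu\}$ and $\sum_{b\in\beta}\{b-\mu\}$.

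The key step is to exploit the special structure of $HD_\G$: by \eqref{eq:HD-G}, we have $\alpha=\bigcup_{j=1}^r\{i/p_j\}_{i=1}^{p_j}$, so
\[
\sum_{a\in\alpha}\{a-\mu\}=\sum_{j=1}^r\sum_{i=1}^{p_j}\left\{-\mu+\frac{i}{p_j}\right\}.
\]
For each fixed $j$, applying the distribution identity \eqref{eq:fr} with $m=p_j$ and $r=-\mu$ collapses the inner sum to $\{-\mu p_j\}+(p_j-1)/2$. Summing over $j$ and doing the analogous computation on the $\beta$ side yields
\[
\sum_{a\in\alpha}\{a-\mu\}-\sum_{b\in\beta}\{b-\mu\}
=\sum_{j=1}^r\{-\mu p_j\}-\sum_{j=1}^s\{-\mu q_j\}+\frac{s-r}{2},
\]
where the constant comes from $\sum_j(p_j-1)/2=(L-r)/2$ and similarly for $q_j$.

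It remains to compute the linear part. Using $\sum_{i=1}^{p_j}(i/p_j)=(p_j+1)/2$ and summing, one gets $\sum_{a\in\alpha}a=(L+r)/2$ and $\sum_{b\in\beta}b=(L+s)/2$, so the linear contribution $\sum_{b\in\beta}b-\sum_{a\in\alpha}a$ equals $(s-r)/2$. Adding the two halves of $(s-r)/2$ gives the desired constant $s-r$, and combining everything yields exactly the right-hand side of \eqref{eq:p-ord-mu(p-1)}.

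The main obstacle is purely bookkeeping: keeping track of which expressions are valid as equalities of integers versus rationals, and making sure the fractional-part identity \eqref{eq:fr} is applied correctly when $\mu$ is itself of the form $i/p_j$ (so that some of the $\{-\mu+i/p_j\}$ are zero and the identity's handling of the integer case still goes through). The hypothesis $p\equiv 1\bmod M(HD_\G)$ is used implicitly at the beginning to ensure $\mu(p-1)\in\Z$ so the formula \eqref{eq:ordpA} applies; after that the proof is a purely combinatorial manipulation that makes no further use of $p$.
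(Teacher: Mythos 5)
Your proposal is correct and is essentially the paper's own proof run in the opposite direction: the paper starts from the right-hand side, expands each $\{-\mu p_j\}$ via \eqref{eq:fr} into $\sum_{l}\{-\mu+l/p_j\}-(p_j-1)/2$, and converts fractional parts to floors to recover \eqref{eq:ordpA}, with exactly the same bookkeeping of the constants $(L\pm r)/2$, $(L\pm s)/2$ that you carry out. No substantive difference.
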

\begin{proof}
\begin{eqnarray*}
\text{R.H.S.}&\overset{\eqref{eq:fr}}=&s-r+  \sum_{i=1}^r \left ( \sum _{l=1}^{p_i} \left \{-\mu +\frac l{p_i}\right\} -\frac{p_i-1}2\right) -  \sum_{j=1}^s \left ( \sum _{l=1}^{q_i} \left\{ -\mu +\frac l{q_j}\right\} -\frac{q_j-1}2\right)\\
&=&\frac{s-r}2+  \sum_{i=1}^r  \sum _{l=1}^{p_i} \left ( -\mu +\frac l{p_i}- \left \lfloor -\mu +\frac l{p_i}\right \rfloor \right) -  \sum_{j=1}^s  \sum _{l=1}^{q_i} \left (- \mu +\frac l{q_j}- \left \lfloor -\mu +\frac l{q_j}\right \rfloor \right) \\
&=&  \sum_{i=1}^r \sum _{k=1}^{p_i}    - \left \lfloor \mu -\frac {k}{p_i}\right \rfloor  +  \sum_{j=1}^s   \sum _{k=1}^{q_j}     \left \lfloor \mu -\frac {k}{q_i}\right \rfloor\\ 
   &\overset{\eqref{eq:ordpA}}=&\text{ord}_p A_{HD_\G}(\mu  (p-1))\in \Z.
\end{eqnarray*} 
\end{proof}

\section{Hodge numbers of convex cones}\label{ss:4}
\subsection{Background} The background for this section is from \cite{Wan-Newton} by Wan. 
Let $\Delta$ be a polyhedron in $\R^n$ whose vertices are  points in $\Z^n$. For each $\v\in\R^n$,   its  \emph{depth}, denoted by $dep(\v)$ is the sum of its negative entries. For the computation below we assume the interior of $\Delta$ contains no lattice point (vectors in $\Z^n)$ so that its  denominator $1$. Consider the cone $C(\Delta):=C(\Delta,\mathbb R_{\ge 0})$ obtained from rays starting from the origin pointing to points in $\Delta$. For each lattice point $\v$ in this cone, the \emph{weight} of $\v$, denoted by $wt(\v)$ is given as follows: $wt({\bf o})=0$, otherwise $wt({\v})$ is the non-negative scalar $k$ such that $\bf v$ is located on one of the outer faces of the convex polyhedron obtained from $\Delta$ re-scaled by $k$. 
Given   $k\in\Z_{\ge0}$,  let $w_{\Delta}(0)=1$ and for $k\ge 1$, $$w_{\Delta}(k)=\#\left\{ \v \in  C(\Delta) \cap \Z^n \mid wt (\v)=k\right\}.$$  
Then the Hodge numbers of the cone is computed by the inductive formula
\begin{equation}\label{eq:m}
     H_{\Delta}(k):=\sum_{i=0}^n (-1)^i \binom{n}{i} w_{\Delta}(k-i).
 \end{equation}
And $ H_{\Delta}(k)=0$ for $k>n$, see \cite{Wan-Newton}. \smallskip

Now let $\Delta(HD)$ be the polyhedron with vertices $\{{\bf o}\}\cup \{\e_i\}_{i=0}^{n}$, where $\e_i$ as in \eqref{eq:ei}. It  has dimension $n$, and  no interior lattice point. For any lattice point $\v$ of $C(\Delta)$, $wt(\v)$ is the value of its first entry.

\subsection{Primitive elements and a generating set}
Let $$G:=C(\Delta(HD))\cap \Z^n=\left\{\sum _{i=0}^n c_i \e_i \mid c_i\in \mathbb R_{\ge 0} \right \}\cap \Z^n,$$ which is  a semi-group. It is larger than $\{\sum_{i=0}^n a_i\e_i, a_i\in \Z_{\ge 0}\}$  as $\{\e_i\}_{i=0}^n$ is  linearly dependent. Let $$H:=C(\{\e_0,\cdots, \e_{n-1}\},\Z_{\ge0})=C(\{\e_0,\cdots, \e_{n-1}\},\Q_{\ge0})\cap \Z^n.$$ 
Any  $\v\in G$ is said to be \emph{primitive} if  its depth is negative.  
In this case $\v=\sum_{i=0}^n c_i \e_i$ where $c_i\in \Q_{\ge 0}$, $c_n>0$. 
 As $\v$ lies in one of the faces  of $\Delta(HD)$ re-scaled by $k$, we may assume  $c_i=0$ for one $i\in[0,n]$. Then $\{\e_j\}_{j=0,j\neq i}^n$ form an invertible matrix whose determinant is either $\pm p_j$  or $\pm q_j$. For more details, see Lemma \ref{lemma: determinant} and its proof. Thus for $l\neq i$,  $c_l=\frac{k_l}{p_j}$ or $\frac{k_l}{q_j}$ for some positive integers $k_l,j$. 
 For any $\mu \in (\alpha \cup \beta)$, let
 \begin{equation}\label{eq:ceil}
   \u_\mu:=\mu \e_n+ \left \{ -\mu p_r\right\}\e_0+ \sum_{i=1}^{r-1}\{- \mu p_i\}\e_i + \sum_{j=1}^{s-1}\{ \mu q_j\} \e_{r-1+j}.
\end{equation}In particular when $\mu=1$, $\u_1=\e_n$. It  has weight 1 and depth $-L$. When $\mu<1$, 
\begin{equation}\label{eq:wt-ceil}
\begin{split}
    wt(\u_\mu)&= \sum_{i=1}^{r}\{-\mu p_i\} + \sum_{j=1}^{s}\{ \mu q_j\}\\
    &\overset{\eqref{eq:fracsum}}= \sum_{i=1}^{r}\{-\mu p_i\}  -\sum_{j=1}^{s}\{- \mu q_j\}+s-\#\{j\mid 1\le j\le s, \mu q_j\in\Z\}\\
&\overset{\eqref{eq:nb},\eqref{eq:p-ord-mu(p-1)}}=r+\text{ord}_p A_{HD}(\mu (p-1)) -\#\{ \mu  \in \beta\}.
\end{split}
\end{equation}
From construction,  $\u_\mu$ is in $ G$.
Intuitively, it is the ``ceiling" function of $\mu\e_n$, namely the element in $G$ closest to the vector $\mu\e_n$ with $dep(\u_\mu-\mu\e_n)=0$. 

Assume the $i$th entry of the combined list \eqref{eq:mu} is in $\beta$, namely $\mu_i\in\beta$ with multiplicity $n_{\mu_i,\beta}$ such that all of its other duplicates in $\beta$ have smaller indices. 
Thus $\mu_{i-n_{\mu_i,\beta}}<\mu_{i-n_{\mu_i,\beta}+1}=\mu_i$. Then
\begin{equation}\label{eq:wt-mu}
    wt(\u_{\mu_i})=wt(\u_{\mu_{i-n_{\mu_i,\beta}+1}})\overset{\eqref{eq:wt-ceil}}=r+\text{ord}_p A_{HD}( \mu_{i-n_{\mu_i,\beta}+1}(p-1)) -n_{\mu_i,\beta} \overset{\eqref{eq:ordpA-Phi}}=\Phi(i-n_{\mu_i,\beta})-n_{\mu_i,\beta}\overset{\eqref{eq:Phi}}=\Phi(i).
\end{equation}Also for $0\le l\le n_{\mu_i,\beta}-1$
\begin{equation}\label{eq:wt-mu-before}
   \Phi(i-l)\overset{\eqref{eq:Phi}}=\Phi(i)+l.
\end{equation}
We now focus on $\u_{\mu}$ where $\mu\in \beta$. For convenience, we let $b_0=0, b_{L+1}=2$. They are not in $\beta$.
\begin{lemma}\label{lem:1}Assume the elements in $\beta$ are listed in a non-decreasing order, i.e. $0=b_0<b_1\le b_2\le \cdots \le b_L< b_{L+1}$.  

1) For each positive $b\in \beta$, $\u_b$ is primitive.

2) For $b_i\neq b_j$ in $\beta$,  $\u_{b_i}\neq \u_{b_j}$. 

3) Assume $b_{i-1}<b_{i}$ and let $S_i=\{1\le j\le s-1 \mid b_i q_j\in \Z\}$, then $\u_{b_i}+\sum_{j\in S_i}\e_{r+j}\in \u_{b_{i-1}}+\, H$
\end{lemma}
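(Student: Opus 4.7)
My plan is to compute $\u_\mu$ explicitly in the basis $\{\e_0, \ldots, \e_{n-1}\}$ of $\Z^n$ and then verify each part of the lemma coefficient by coefficient. These vectors form a $\Z$-basis: subtracting $\e_0$ from $\e_k$ for $k=1,\dots,n-1$ recovers the standard unit vectors in positions $2,\dots,n$. A short calculation using $\sum p_i = \sum q_j = L$ and $q_s = 1$ gives
$$\e_n = p_r\,\e_0 + \sum_{k=1}^{r-1} p_k\,\e_k - \sum_{l=1}^{s-1} q_l\,\e_{r-1+l}.$$
Substituting into \eqref{eq:ceil} and using $x + \{-x\} = \lceil x \rceil$ together with $-x + \{x\} = -\lfloor x \rfloor$, I expect to obtain the clean formula
$$\u_\mu = \lceil \mu p_r \rceil\,\e_0 + \sum_{k=1}^{r-1} \lceil \mu p_k \rceil\,\e_k - \sum_{l=1}^{s-1} \lfloor \mu q_l \rfloor\,\e_{r-1+l},$$
which reads off the coordinates of $\u_\mu$ in our basis. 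All three parts of the lemma will then follow from direct inspection of these coordinates.

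For part 1), the depth of $\u_b$ equals $-\sum_{l=1}^{s-1}\lfloor b q_l\rfloor$, so it suffices to exhibit one $l\in\{1,\ldots,s-1\}$ with $b q_l\ge 1$. When $b=1$ any $l$ works; when $b\in\beta$ with $b<1$, writing $b=m/q_l$ with $1\le m<q_l$ forces $q_l>1$, and since $q_s=1$ this $l$ lies in $\{1,\ldots,s-1\}$ with $\lfloor b q_l\rfloor=m\ge 1$. Exactly the same reasoning gives part 2): pick $l\in\{1,\ldots,s-1\}$ with $b_j q_l\in\Z$; then $\lfloor b_i q_l\rfloor\le b_i q_l<b_j q_l=\lfloor b_j q_l\rfloor$, so the $(r+l)$-th coordinates of $\u_{b_i}$ and $\u_{b_j}$ differ.

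For part 3), since $\{\e_0,\ldots,\e_{n-1}\}$ is a $\Z$-basis, the semigroup $H$ is exactly the set of non-negative integer combinations of these vectors. I will expand the target $\u_{b_i}+\sum_{j\in S_i}\e_{r-1+j}-\u_{b_{i-1}}$ (reading the $\e_{r+j}$ in the lemma as the basis vector supported at position $r+j$, namely $\e_{r-1+j}$) in this basis. The $\e_0$- and $\e_k$-coefficients, for $1\le k\le r-1$, are $\lceil b_i p_r\rceil-\lceil b_{i-1}p_r\rceil$ and $\lceil b_i p_k\rceil-\lceil b_{i-1}p_k\rceil$, both non-negative by monotonicity of $\lceil\cdot\rceil$. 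The $\e_{r-1+l}$-coefficient equals $\lfloor b_{i-1}q_l\rfloor-\lfloor b_i q_l\rfloor$ when $l\notin S_i$, and $\lfloor b_{i-1}q_l\rfloor-\lfloor b_i q_l\rfloor+1$ when $l\in S_i$.

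The main obstacle, and the heart of the argument, is controlling the jump of $\lfloor b q_l\rfloor$ as $b$ passes from $b_{i-1}$ to $b_i$. I will use the hypothesis $b_{i-1}<b_i$, which by the sorting convention means no element of $\beta$ lies in the open interval $(b_{i-1},b_i)$. When $l\notin S_i$, any integer $m\in(b_{i-1}q_l,b_i q_l]$ would satisfy $1\le m\le q_l-1$ and give $m/q_l\in\beta\cap(b_{i-1},b_i)$, a contradiction; hence $\lfloor b_i q_l\rfloor=\lfloor b_{i-1}q_l\rfloor$ and the coefficient is zero. When $l\in S_i$, applying the same idea to $m=b_i q_l-1$ (with the trivial subcase $b_i q_l=1$ handled by $b_{i-1}q_l\ge 0$) forces $\lfloor b_{i-1}q_l\rfloor\ge b_i q_l-1$, so the coefficient lies in $\{0,1\}$. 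This verifies non-negativity of every basis coefficient and completes the proof.
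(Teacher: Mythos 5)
Your proof is correct and follows essentially the same route as the paper's: both compute the coordinates of $\u_b$ explicitly (the key point being that the entry attached to $q_l$ is $-\lfloor bq_l\rfloor$) and observe that $\lfloor bq_l\rfloor$ can only jump, as $b$ passes from $b_{i-1}$ to $b_i$, by $0$ or $1$ according to whether $b_iq_l\in\Z$, because any intermediate value of $m/q_l$ would be an element of $\beta$ strictly between consecutive sorted entries. Your write-up supplies details the paper leaves implicit — notably the expansion in the $\Z$-basis $\{\e_0,\dots,\e_{n-1}\}$, the non-negativity of the $\e_0,\dots,\e_{r-1}$ coefficients in part 3) (which is genuinely needed to conclude membership in $H$), and the correction of the index $\e_{r+j}$ to $\e_{r-1+j}$ — but the underlying argument is the same.
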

\begin{proof}
Note that the  $(r-1+j)$th entry of
$\u_{b_i}$ is $-\lfloor b_i q_j \rfloor$ and at least one of them is negative, so $\u_{b_i}$ is primitive. If $b_i\neq b_j$,  by \eqref{eq:ceil} $\u_{ b_i}\neq  \u_{b_j}$.

Each time when $i$ increases by 1, the $(r-1+j)$'s entry of $\u_{b_i}$  either stays the same or  decrease by 1 depending on whether $u_iq_j\in\Z$. 
\end{proof}

\begin{lemma}\label{lem:2}
For each primitive  ${\bf v}\in G$, there is exists an element 
$b\in \beta$, such that ${\bf v}\in \u_b+ \,  \langle H,\e_n\rangle=\{i\u+j\e_n\mid \u \in H, i,j\in\Z_{\ge0}\}.$ 
\end{lemma}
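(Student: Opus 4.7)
The plan is to work in the $\Z$-basis $\{\e_0,\ldots,\e_{n-1}\}$ of $\Z^n$, which really is a basis because the matrix with these rows is upper triangular with ones on the diagonal. A direct computation gives
\[
\e_n=p_r\e_0+\sum_{j=1}^{r-1}p_j\e_j-\sum_{j=1}^{s-1}q_j\e_{r-1+j},
\]
using $q_s=1$ and $\sum p_i=\sum q_j$. If $\v=\sum_{i=0}^{n-1}w_i\e_i$ then $H$ is the orthant $\{w_i\ge 0\}$, and substituting the above into \eqref{eq:ceil} yields
\begin{equation*}
\u_b+k\e_n=\lceil(b{+}k)p_r\rceil\e_0+\sum_{j=1}^{r-1}\lceil(b{+}k)p_j\rceil\e_j-\sum_{j=1}^{s-1}\lfloor(b{+}k)q_j\rfloor\e_{r-1+j}.
\end{equation*}
Since the $w_i$ are integers, the inclusion $\v-\u_b-k\e_n\in H$ is equivalent to $(b{+}k)p_r\le w_0$, $(b{+}k)p_j\le w_j$ for $1\le j\le r-1$, and $(b{+}k)q_j\ge -w_{r-1+j}$ for $1\le j\le s-1$.

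The key observation is that this system is exactly the constraint on $c_n$ in a non-negative expansion $\v=\sum_{i=0}^n c_i\e_i$. With
\[
c_n^{\min}:=\max_{1\le j\le s-1}\frac{-w_{r-1+j}}{q_j},\qquad c_n^{\max}:=\min\Bigl(\frac{w_0}{p_r},\,\min_{1\le j\le r-1}\frac{w_j}{p_j}\Bigr),
\]
one has $\v\in G$ iff $c_n^{\min}\le c_n^{\max}$, and primitivity of $\v$ (some $w_{r-1+j}<0$) forces $c_n^{\min}>0$. Hence any $b_{\mathrm{eff}}\in[c_n^{\min},c_n^{\max}]$ yields $\v-\u_b-k\e_n\in H$ as long as $b_{\mathrm{eff}}$ can be split as $b+k$ with $b\in\beta$ and $k\in\Z_{\ge 0}$.

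The final step is to take $b_{\mathrm{eff}}:=c_n^{\min}$. Letting $j^*\in\{1,\ldots,s-1\}$ attain the maximum, $c_n^{\min}=M/q_{j^*}$ with integer $M:=-w_{r-1+j^*}\ge 1$. Writing $M=kq_{j^*}+i$ with $k\in\Z_{\ge 0}$ and $1\le i\le q_{j^*}$ gives $b:=i/q_{j^*}$, which by the definition \eqref{eq:HD-G} belongs to $\beta$; and $b+k=c_n^{\min}$, as required.

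The main ``obstacle'' here is really an observation: the cone-admissibility inequalities defining $\v\in G$ are literally identical to the conditions making $\v-\u_b-k\e_n\in H$ once one substitutes the basis expansion of $\u_b$. Once this is seen, producing a valid $b\in\beta$ is automatic because $c_n^{\min}$ has denominator in $\{q_1,\ldots,q_{s-1}\}$, matching the form of the elements of $\beta$.
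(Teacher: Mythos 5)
Your proof is correct, and it follows the same basic strategy as the paper's — write everything in explicit coordinates and choose $b\in\beta$ as a $q_j$-denominator rounding of the $\e_n$-coefficient — but it is organized differently and arguably more transparently. The paper starts from a non-negative expansion $\v=\sum c_i\e_i$, reduces $c_n$ to $(0,1)$, sets $b=\max_j\lfloor c_nq_j\rfloor/q_j$, and compares $\u_{c_n}$ with $\u_b$ entry by entry (the step $\v-\u_{c_n}\in H$ is left implicit, resting on the ``ceiling'' description of $\u_\mu$). You bypass $\u_{c_n}$ entirely: working in the $\Z$-basis $\{\e_0,\dots,\e_{n-1}\}$, you identify membership of $\v$ in $G$ with non-emptiness of the interval $[c_n^{\min},c_n^{\max}]$ and observe that $\v-\u_b-k\e_n\in H$ is the \emph{same} system of inequalities with $b+k$ in place of $c_n$; taking $b+k=c_n^{\min}=-w_{r-1+j^*}/q_{j^*}$ then lands in $\beta+\Z_{\ge0}$ automatically. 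This eliminates the floor-comparison computation in the paper's proof and makes visible the otherwise implicit fact that $\v\in G$ forces $w_0,\dots,w_{r-1}\ge 0$, so that primitivity is exactly the negativity of some $w_{r-1+j}$. Two cosmetic points: the matrix with rows $\e_0,\dots,\e_{n-1}$ is lower, not upper, triangular; and your $c_n^{\min}$ should formally be $\max(0,\max_j(-w_{r-1+j}/q_j))$ for the stated equivalence ``$\v\in G$ iff $c_n^{\min}\le c_n^{\max}$'' to hold verbatim — harmless here, since primitivity gives $c_n^{\min}>0$.
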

\begin{proof}Assume  ${\bf v}=\sum_{i=0}^{n} c_i \e_i $ and $c_n>0$. If $c_n\in \Z$ and then we can pick $b=1$; otherwise, replace $\bf v$ by ${\bf v}-\lfloor c_n \rfloor \e_n$ we may assume $c_n \in (0,1)\cap \Q$. Next pick the largest element $b\in \beta$, satisfying  $b\le c_n$. Namely pick the largest value among fractions of the form $\frac{\lfloor c_n q_j\rfloor}{q_j}$ in $\beta$. Up to relabelling, we may assume $b=\frac{\lfloor c_n q_1\rfloor}{q_1}$. The $r-1+j$ entry of $\u_{ c_n}$ (resp. $\u_{ \frac{\lfloor c_n q_1 \rfloor}{q_1}}$) is 
$-\lfloor c_n q_j\rfloor$ (resp. $-\lfloor \frac{\lfloor c_n q_1 \rfloor}{q_1} q_j\rfloor$). From the choice of $b$, either $\lfloor c_n q_j\rfloor=\lfloor \frac{\lfloor c_n q_1 \rfloor}{q_1} q_j\rfloor$, or $\lfloor c_n q_j\rfloor=\lfloor \frac{\lfloor c_n q_1 \rfloor}{q_1} q_j\rfloor+1$. In the latter case, $$\frac{\lfloor c_n q_j\rfloor}{q_j}= \frac{\lfloor \frac{\lfloor c_n q_1 \rfloor}{q_1} q_j\rfloor+1}{q_j}\le  \frac{\lfloor c_n q_1 \rfloor}{q_1}$$ which implies $1-\left \{  \frac{\lfloor c_n q_1 \rfloor}{q_1} q_j \right\}<0$, a contradiction. Thus $\u_{ c_n}$ and  $\u_{ \frac{\lfloor c_n q_1 \rfloor}{q_1}}$ have the same $r-1+j$ entries for $j=1,\cdots, s-1$. Also for each $2\le i\le r$, the $i$th entry of $\u_{c_n}$ is greater or equal to the $i$th entry of $\u_{ \frac{\lfloor c_n q_1 \rfloor}{q_1}}$. Thus ${\bf v}\in \u_b+ \,  \langle H,\e_n\rangle.$ 
\end{proof}

\subsection{A set of primitive elements}
We now modify the list $\{\u_b\mid b\in \beta\}$ by incorporating the multiplicities of the elements to get another set consisting of distinct elements. For each $\mu_i\in\beta$,  let $S_i=\{1\le j\le s-1 \mid \mu_i q_j\in \Z\}$ as in Lemma \ref{lem:1}, let $n_{\mu_i,\beta}=|S_i|$. Further we let $\displaystyle {\bf v}_i:=\sum_{j\in S_i} \e_{r-1+j}$ and let ${\bf v}_{i,l}$ be the partial summand of ${\bf v}_i$ consisting the first $l$ entries, $wt(\v_{i,l})=l$. 

Like before, we assume all of its other duplicates of $\mu_i$ in $\beta$ appear before $\mu_i$. For each $1\le l<m_i$,   
\begin{equation}\label{eq:w}
    w_{dep(\u_{\mu_i})-l}:= \u_{\mu_i}+\v_{i,l}.
\end{equation}\
It has weight $wt(\u_{ b_i})+l=\Phi(i)+l=\Phi(i-l)$ by \eqref{eq:wt-mu} and \eqref{eq:wt-mu-before}, and depth $dep(\u_{\mu_i})+l$. 

Further we let $\w_0={\bf o}$.  In conclusion, we have
\begin{prop}\label{prop:wt-Phi}
For each $0\le i\le L$,  $dep(\w_i)=-i$, and
  \begin{equation}
     \{wt(\w_i)\}_{i=1}^L=\{\Phi(i) \mid 1\le i\le 2L, \mu_i\in \beta\}. 
  \end{equation}
\end{prop}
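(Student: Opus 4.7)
The plan is to verify the two assertions by unwinding the construction of the $\w_i$. The case $i=0$ is immediate since $\w_0=\mathbf{o}$. For $i \ge 1$, each $\w_i$ is of the form $\u_{\mu_j} + \v_{j,l}$ for a distinct $\mu_j \in \beta$ and some $0 \le l < n_{\mu_j,\beta}$. The key elementary observation is that for $k\in S_j$, position $r+k$ of $\u_{\mu_j}$ equals $-\mu_j q_k \le -1$, whereas $\e_{r-1+k}$ carries a $+1$ in that coordinate together with a $+1$ in the first (weight) coordinate. Adding one such basis vector therefore raises the depth by exactly one (the entry $-\mu_j q_k$ becomes $-\mu_j q_k + 1 \le 0$, still contributing to the depth but with absolute value smaller by one) and raises the weight by exactly one. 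Iterating gives $dep(\u_{\mu_j}+\v_{j,l}) = dep(\u_{\mu_j}) + l$ and $wt(\u_{\mu_j}+\v_{j,l}) = wt(\u_{\mu_j}) + l$.

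For the depth claim, I would arrange the distinct values of $\beta$ in increasing order and argue by induction that the blocks
\[
\{\, dep(\u_{\mu_j}),\ dep(\u_{\mu_j})+1,\ \ldots,\ dep(\u_{\mu_j})+ n_{\mu_j,\beta}-1 \,\}
\]
tile $\{-L, -L+1, \ldots, -1\}$ with no overlap. The edge-to-edge matching of consecutive blocks is delivered by Lemma~\ref{lem:1}(3): the relation $\u_{b_i} + \sum_{j\in S_i} \e_{r-1+j} \in \u_{b_{i-1}} + H$ places the ``one past the top'' of the $b_{i-1}$-block at the bottom of the $b_i$-block, precluding any gap. Combined with the count $\sum_{\mu\in \beta^{\text{dist}}} n_{\mu,\beta} = L$ and the boundary identification of $\u_1$ (and its completion by the $\e_{r-1+j}$'s) at the deepest end, this pins down $dep(\w_i)=-i$.

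For the weight claim, \eqref{eq:wt-mu} gives $wt(\u_{\mu_i}) = \Phi(i)$ when $i$ is the last $\beta$-index carrying the value $\mu_i$, and \eqref{eq:wt-mu-before} gives $\Phi(i-l) = \Phi(i) + l$ for $0 \le l < n_{\mu_i,\beta}$. Combining with the weight increment from above yields $wt(\u_{\mu_i}+\v_{i,l}) = wt(\u_{\mu_i})+l = \Phi(i)+l = \Phi(i-l)$. As $l$ varies over $0,1,\ldots,n_{\mu_i,\beta}-1$ and $\mu_i$ over the distinct values in $\beta$, the indices $i-l$ range over exactly the positions in the sorted list~\eqref{eq:mu} at which $\mu\in\beta$, producing the required multiset identity.

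The main obstacle I expect is the tiling statement in the depth argument: one must verify that Lemma~\ref{lem:1}(3) really delivers the edge-to-edge matching of depth blocks and carefully handle the boundary case $\mu_i=1$, where $q_s=1$ creates a slight asymmetry between $|S_i|$ and the full $\beta$-multiplicity of $1$. Once the tiling is in hand, the weight identity reduces to direct substitution into the already-established formulas.
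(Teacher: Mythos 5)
Your overall route is the same as the paper's: Proposition \ref{prop:wt-Phi} is presented there as a summary of the preceding construction, with the weight identity coming from \eqref{eq:wt-mu}--\eqref{eq:wt-mu-before} and the increments $dep(\u_{\mu_i}+\v_{i,l})=dep(\u_{\mu_i})+l$, $wt(\u_{\mu_i}+\v_{i,l})=wt(\u_{\mu_i})+l$, which you verify correctly. The genuine gap sits exactly where you put your ``main obstacle,'' but it is more than a bookkeeping point to be ``carefully handled'': the tiling you propose is false as stated. Because $q_s=1$, the construction uses $|S_i|=\#\{1\le j\le s-1:\mu_iq_j\in\Z\}$, which equals the $\beta$-multiplicity of $\mu_i$ when $\mu_i<1$ but equals $s-1=n_{1,\beta}-1$ when $\mu_i=1$. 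Hence the depth blocks contributed by the distinct elements of $\beta$ have total size $L-1$, not $L$: the edge-to-edge matching $dep(\u_{b_i})=dep(\u_{b_{i-1}})-|S_i|$ (which does follow from the observation in Lemma \ref{lem:1}) tiles $\{-1,\dots,-(L-1)\}$ only, and depth $-L$ is never attained. For $\G=[3,-2,-1]$ one gets exactly $\mathbf{o}$, $[1,-1]$, $[1,-2]$ with depths $0,-1,-2$ while $L=3$. If you instead insist on the full multiplicity $n_{1,\beta}=s$ at $\mu=1$, the extra vector $\u_1+\v_{1,s-1}$ either coincides in depth with $\u_b$ for the largest $b<1$ or, as in $[3,-2,-1]$ where $\u_1+\e_1=[2,-1]$, is a genuinely new point whose weight $s$ does not belong to $HT(HD)$; either way the multiset identity breaks.

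A related slip: your weight step applies \eqref{eq:wt-mu} at $\mu=1$, but \eqref{eq:wt-ceil} is derived only for $\mu<1$; indeed $wt(\u_1)=1$ whereas $\Phi$ at the last $\beta$-copy of $1$ equals $0$. The correct accounting is that the $b=1$ block contributes $\u_1+\v_{1,l}$ for $0\le l\le s-2$ with weights $1,\dots,s-1$, matching $\Phi$ at the first $s-1$ copies of $1\in\beta$, while the remaining value $\Phi=0$ at the deepest copy is supplied by $\w_0=\mathbf{o}$; correspondingly the index range must be $0\le i\le L-1$ (as in Proposition \ref{prop:H-wt}), not $0\le i\le L$. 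So your proof needs the $\mu=1$ case worked out explicitly --- separate increment count, separate weight computation, and the identification of $\w_0$ with the $\Phi=0$ term --- rather than deferred as an expected difficulty; once that is done, the rest of your argument goes through.
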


\begin{example}
 $\v=[4,4,2,-3,-6,-1]$
 
$$  \begin{array}{|c|l|l|c|}\hline
b&\u_b&\w_i\\ \hline
  \frac{1}{6}&[2, 1, 1, 0, -1]&\w_1=[\bl 2\bk, 1, 1, 0, -1]\\
\frac{1}{3}&[2, 2, 2, -1, -2]&\w_2=[\bl3\bk, 2, 2, 0, -2]\\
 \frac{1}{2}&[1, 2, 2, -1, -3]&\w_3=[\bl2\bk, 2, 2, -1, -2]\\
\frac{2}{3}&[2, 3, 3, -2, -4]&\w_4=[\bl1\bk, 2, 2, -1, -3]\\
\frac{5}{6}&[3, 3, 3, -2, -4]&\w_5=[\bl3\bk, 3, 3, -1, -4]\\
1&[1,4,4,-3,-6]&\w_6=[\bl2\bk, 3, 3, -2, -4]\\
&&\w_7=[\bl3\bk,4,4,-2,-5]\\
&&\w_8=[\bl2\bk,4,4,-2,-6]\\
&&\w_9=[\bl1\bk,4,4,-3,-6]\\ \hline
 \end{array}$$ 
In this case $HD_\G=\{\{\frac14,\cdots,\frac44,\frac14,\cdots,\frac44,\frac12,1\},\{\frac13,\frac23,\frac33,\frac16,\cdots,\frac66,1\}\} $ 
 \begin{center}
  \includegraphics[width=5in]{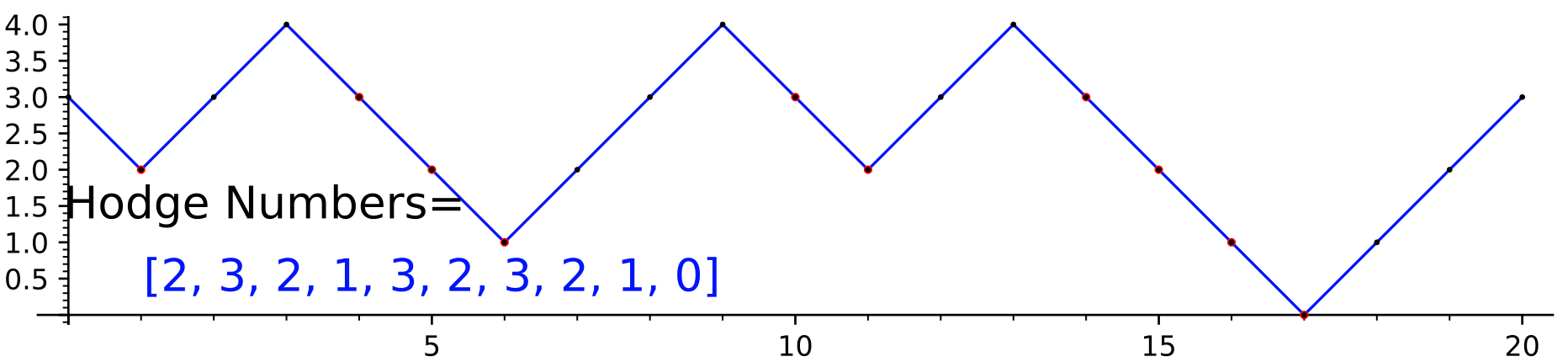}   
 \end{center}
\end{example}

\begin{prop}\label{prop:partition}
    
For each $i\in[0,L]$ there is an integer $s(i)\in\{1,\cdots,n\}$ such that \begin{equation}\label{eq:key}
   G=
   \bigsqcup_{i=0}^{L} \left  (\w_i+ C(\{\e_0,\e_1,\cdots, \check{\e}_{s(i)},\cdots, \e_n\}, \Z_{\ge 0}) \right) , 
\end{equation} where $\w_i$ is defined in \eqref{eq:w}, $\check{\e}_{s(i)}$ means removing ${\e}_{s(i)}$ from the list.
\end{prop}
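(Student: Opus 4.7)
The plan is to explicitly identify $s(i)$ for each $i \in \{0,\dots,L\}$ and then verify, as separate steps, that the pieces cover $G$ and are pairwise disjoint. The key algebraic input will be the unique (up to scalar) linear relation among $\e_0,\dots,\e_n$,
\[
\e_n + \sum_{j=1}^{s-1} q_j\, \e_{r-1+j} \;=\; p_r\, \e_0 + \sum_{i=1}^{r-1} p_i\, \e_i,
\]
which is verified directly from \eqref{eq:ei} using $\sum p_i = \sum q_j = L$ and $q_s = 1$. Both sides have coefficient sum $L$, encoding the unique rational dependence among the $n+1$ generators of the $n$-dimensional cone $C(\Delta(HD))$.

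For the choice of $s(i)$: I take $s(0)=n$, so the $i=0$ summand is $\mathbf{o}+C(\{\e_0,\dots,\e_{n-1}\},\Z_{\ge 0}) = H$. For $i\ge 1$, recall from \eqref{eq:w} that $\w_i = \u_{\mu_{j(i)}} + \v_{j(i), l(i)}$ with $\mu_{j(i)} \in \beta$ and $\v_{j(i), l(i)}$ a partial sum of $\e_{r-1+k}$ over $k \in S_{j(i)} = \{k : 1 \le k \le s-1,\ \mu_{j(i)} q_k \in \Z\}$. By \eqref{eq:ceil}, the $(r-1+k)$-th coefficient of $\u_{\mu_{j(i)}}$ vanishes for every $k \in S_{j(i)}$, while $\v_{j(i), l(i)}$ uses only $l(i) < |S_{j(i)}|$ of these slots; I take $s(i) = r-1+k_0$ for the smallest unused $k_0 \in S_{j(i)}$. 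This puts $\w_i$ on the facet $\{c_{s(i)}=0\}$, which is the natural anchor for the simplicial sub-cone $C(\{\e_j : j \ne s(i)\}, \Z_{\ge 0})$.

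For covering, given $\v \in G$: if $dep(\v)=0$ then $\v \in H$ and we are handled by $i=0$. Otherwise, Lemma \ref{lem:2} yields $b \in \beta$ with $\v = \u_b + h + k\e_n$ for some $h \in H$, $k\in\Z_{\ge 0}$. Repeatedly applying the displayed relation to trade each excess $\e_n$ absorbs the $k\e_n$ term while subtracting $q_j$-multiples from the $\e_{r-1+j}$ slots; the remainder rearranges into $\w_i + x$ with $x \in C_i$, and tracking which duplicate of $\mu \in \beta$ matches $\v$ pins down the index $i$.

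The main obstacle will be disjointness. Suppose $\v = \w_i + x = \w_{i'} + x'$ with $i \ne i'$: since the depths $dep(\w_i) = -i$ are all distinct and the depth of any $x \in C_i$ is $-(L-1)$ times its $\e_n$-coefficient, matching depths forces a rigid constraint on those coefficients; combined with the facet condition $c_{s(i)}=0$ in the expansion of $\w_i$ and the ordering convention on duplicates from Theorem \ref{thm:main} (copies of $\mu \in \alpha\cap\beta$ from $\beta$ precede those from $\alpha$), the two expansions cannot coincide. A useful consistency check, ultimately equivalent to the partition, is the Hilbert-series identity
\[
\sum_{\v\in G} t^{wt(\v)} \;=\; \frac{\sum_{i=0}^L t^{wt(\w_i)}}{(1-t)^n},
\]
which, read backwards, recovers the Hodge-number formula of Theorem \ref{thm:main} via Proposition \ref{prop:wt-Phi}.
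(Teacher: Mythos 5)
Your overall strategy mirrors the paper's (take $s(0)=n$ so that the $i=0$ piece is $H$, use Lemma \ref{lem:2} plus the single linear relation among the $\e_j$ for covering, and treat disjointness separately), and your explicit rule $s(i)=r-1+k_0$ for the smallest unused $k_0\in S_{j(i)}$ is consistent with the paper's worked example for $\G=[5,-2,-2,-1]$, where the paper only characterizes $s(i)$ implicitly as the unique index with $\w_i+\e_{s(i)}\in U_{i-1}$. However, your disjointness argument rests on a false premise. You assert that the depth of $x=\sum_{j\neq s(i)}a_j\e_j\in C_i$ equals $-(L-1)$ times its $\e_n$-coefficient. Depth is the sum of the \emph{negative} entries, and the entry of $x$ in slot $r+j$ is $a_{r-1+j}-q_ja_n$; hence $dep(x)=\sum_{j}\min(0,\,a_{r-1+j}-q_ja_n)$, which equals $-(L-1)a_n$ only when all $a_{r-1+j}$ vanish (already $x=\e_n+\e_r$ is a counterexample). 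Depth is a concave piecewise-linear function, not a linear one, so it is also not additive: $dep(\w_i+x)\neq dep(\w_i)+dep(x)$ in general. Consequently ``matching depths forces a rigid constraint on the $\e_n$-coefficients'' does not follow, and the disjointness step collapses. The paper instead argues by induction on $i$: having built the disjoint union $U_{i-1}$, it locates the unique generator $\e_{s(i)}$ with $\w_i+\e_{s(i)}\in U_{i-1}$ and deletes that generator from the new cone; some version of that mechanism (or the uniqueness-of-support argument of Section \ref{ss:another}) is what you need here.

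The covering step is also too thin at exactly the point where the content lies. After invoking Lemma \ref{lem:2} to write $\v=\u_b+h+k\e_n$, you say that trading copies of $\e_n$ via the relation makes ``the remainder rearrange into $\w_i+x$ with $x\in C_i$,'' but which $i$, and why the resulting coefficients are non-negative integers avoiding $\e_{s(i)}$, is precisely what must be proved. The paper does this by choosing $b$ \emph{minimal} in $\beta$ with $\v-\u_b\in H$ and then applying Lemma \ref{lem:1}(3) to see that $\v-\u_b-\sum_{j\in S}\e_{r+j}\notin H$, which forces $\v$ into one of the shifted cones attached to the duplicates of $b$; your proposal never uses the minimality of $b$ or Lemma \ref{lem:1}(3), and without some such device the index $i$ is not pinned down. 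In short: the skeleton and the explicit $s(i)$ are fine, but both halves of the verification need to be redone.
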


\begin{proof} 

We first prove that the right-hand side is a disjoint union. We let $s(0)=n$. So $\w_0+ C(\{\e_0,\e_1,\cdots, \check{\e}_{n-1}\}, \Z_{\ge 0})=\, H$.  Since $\w_1$ has depth $-1$, there exists a unique $\e_i$, whose index is $s(1)$  such that $\w_1+\e_{s(1)}$ is in $ H$. Thus $ H$ and $\w_1+ C(\{\e_0,\e_1,\cdots, \check{\e}_{s(1)},\cdots, \e_n\}, \Z_{\ge0})$ are disjoint. Proceed by induction and assume   the sets $$U_{k-1}:=\,   H \bigsqcup_{i=1}^{k-1} \left  (\w_i+ C(\{\e_0,\cdots, \check{\e}_{s(i)},\cdots, \e_n\}, \Z_{\ge0}) \right)$$ is  a disjoint union, as claimed. For  $\w_{k}$, by construction, there exists a unique $s(k)$ such that $\w_{k+1}+\e_{s(k)}$ is in $U_{k-1}$.  Thus $\w_{k}+ C(\{\e_0,\cdots, \check{\e}_{s(k)},\cdots, \e_n\}, \Z_{\ge0})$ is disjoint from $U_{k-1}$ and let $U_k=U_{k-1} \bigsqcup (\w_{k}+ C(\{\e_0,\cdots, \check{\e}_{s(k)},\cdots, \e_n\}, \Z_{\ge0})).$

Next we show that every primitive element $\v$ lies in one of the cones with vertices $\w_i$. Taking away a positive integer multiple of $\e_n$ if necessary, we may assume the depth of $\v$ is larger than $-L$. By the previous Lemma, $\v=\u_{b_i}+\w$ for some $b_i<1\in \beta$ and $\w \in\,  H$. Assume now $b_i$ is the least among $\beta$ such that $\v-\u_{b_i}$ in $ H$, by 3) of Lemma \ref{lem:1}  this means $\w-(\sum_{i\in S_i} \e_{r+i})\not \in \,  H$ so it lies in one of the cone $\w_i + C(\{\e_0,\e_1,\cdots, \check{\e}_{s(i)},\cdots, \e_n\}, \Z_{\ge0}).$
\end{proof}

The previous proposition gives a partition of $ G$ into lattices points located in $L+1$ cones. Each of them is generated by a  polyhedron, denoted by $\Delta_i$, with vertices $\{{\bf o},\e_0,\e_1,\cdots, \check{\e}_{s(i)},\cdots, \e_n\}$  whose apex located at $\w_i$.
\begin{example}For $\G=[5,-2,-2,-1]$, $
\w_0=[0,0,0],\w_1=[2,0,-1], s(1)=2;
\w_2=[1,-1,-1], s(2)=1;
\w_3=[2,-1,-2], s(3)=2;
\w_4=[1,-2,-2],s(4)=1.$ In the following plot, we mark the lattices in $\w_i+C(\Delta_i,\Z_{\ge0})$ with colors green, red, purple, blue and orange respectively.
\begin{center}
    \includegraphics[height=3.5in]{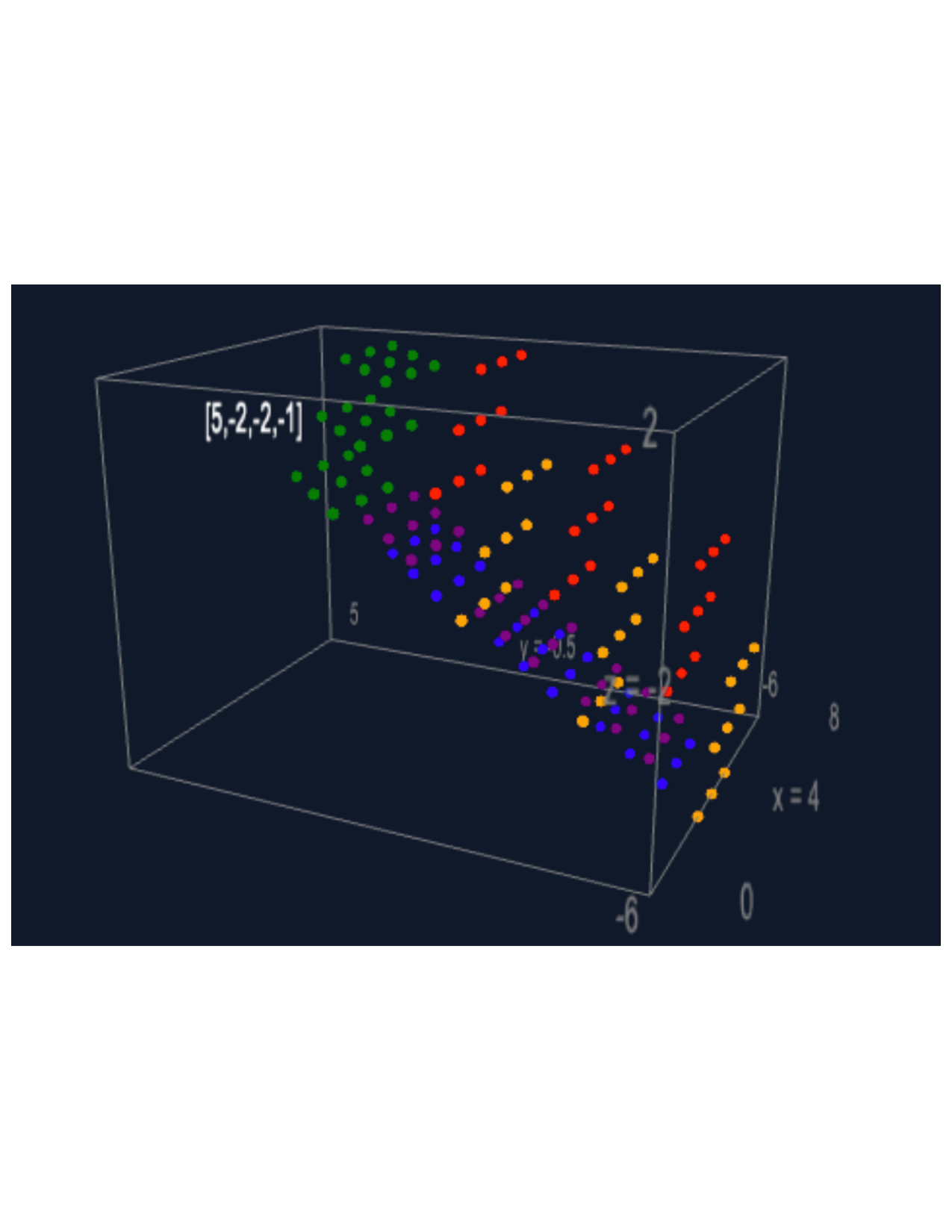}
\end{center}
\end{example}

\medskip
The following identity  follows from the Chu-Vandermonde's identity that 
for any  $n,k\in \Z_{\ge0},$
\begin{equation}\label{eq:eval}
\sum_{i=0}^n (-1)^i \binom{n}{i} \binom{n-1+k-i}{n-1} =\begin{cases} 1& \text{ if} \quad k=0;\\0&\text{else}.\end{cases}    
\end{equation}

\begin{prop}\label{prop:H-wt}
For $0\le k\le n=r+s-1$,
$H_{\Delta(HD)}(k)=\#\{\w_i \mid 0\le i\le L-1, wt(\w_i)=k\}.$
\end{prop}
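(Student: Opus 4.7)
The plan is to use the partition of $G$ from Proposition \ref{prop:partition} to express $w_{\Delta(HD)}(k)$ as a sum of elementary stars-and-bars counts, substitute into the defining formula \eqref{eq:m} of the Hodge numbers, and collapse the resulting double sum using the identity \eqref{eq:eval}.

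First I would record the elementary but crucial observation that every generator $\e_j$ listed in \eqref{eq:ei} has first coordinate equal to $1$. Consequently, on each simplicial cone $\w_i + C(\{\e_0,\dots,\check{\e}_{s(i)},\dots,\e_n\},\Z_{\ge 0})$ appearing in the partition, the weight function is additive: a lattice point of the form $\w_i + \sum_{j \ne s(i)} a_j\e_j$ has weight $wt(\w_i) + \sum_j a_j$. Counting such points of weight $k$ therefore reduces to counting non-negative integer solutions of $\sum_{j \ne s(i)} a_j = k - wt(\w_i)$ in $n$ unknowns, giving $\binom{n-1+k-wt(\w_i)}{n-1}$ when $k \ge wt(\w_i)$ and $0$ otherwise.

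Summing over the $L$ disjoint cones from the partition then yields
\[
w_{\Delta(HD)}(k) \;=\; \sum_{i=0}^{L-1} \binom{n-1+k-wt(\w_i)}{n-1},
\]
with the convention that each binomial vanishes whenever $k < wt(\w_i)$. Plugging this into \eqref{eq:m} and exchanging the order of summation produces
\[
H_{\Delta(HD)}(k) \;=\; \sum_{i=0}^{L-1} \sum_{j=0}^{n} (-1)^j \binom{n}{j}\binom{n-1+(k-wt(\w_i))-j}{n-1}.
\]
The inner sum is precisely the left side of \eqref{eq:eval} with shift $m := k - wt(\w_i)$, so it evaluates to $1$ when $wt(\w_i) = k$ and $0$ otherwise, giving the claim.

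The main obstacle I anticipate is justifying that \eqref{eq:eval} remains valid uniformly across all cones, including those with $wt(\w_i) > k$ where the shift $m$ is strictly negative. Since \eqref{eq:eval} is stated only for non-negative parameters, I would either extend it by checking separately that both sides vanish when $m < 0$ (using that the stars-and-bars binomial $\binom{a}{n-1}$ vanishes for $0 \le a < n-1$ and arguing combinatorially that no lattice points of the relevant weight exist on the geometric side), or simply split the $i$-sum into those with $wt(\w_i) \le k$ and those without, noting that the latter contribute $0$ both to $w_{\Delta(HD)}(k)$ and to the right-hand side of the theorem. A secondary routine verification is that the indexing range $[0, L-1]$ in the partition matches the count from \eqref{eq:w}, so that exactly the apices $\w_i$ with $0 \le i \le L-1$ are enumerated, with $\w_0 = \mathbf{o}$ supplying the weight-$0$ apex and $\e_n = \u_1$ sitting inside the deepest cone.
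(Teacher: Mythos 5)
Your proposal is correct and follows essentially the same route as the paper's own proof: partition $G$ into the simplicial cones with apices $\w_i$, count weight-$k$ points in each by stars-and-bars using that every $\e_j$ has first coordinate $1$, and collapse the alternating sum via \eqref{eq:eval}. Your extra care with the negative-shift case $wt(\w_i)>k$ and with the indexing of the apices only makes explicit what the paper leaves implicit.
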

\begin{proof}When $k=0$, both hand sides take value 1. 
The number of weight-$k$ elements in each $\w_i+C(\Delta_i,\Z_{\ge 0})$ is the number of weight $k-wt(\w_i)$ elements in $C(\Delta_i,\Z_{\ge 0})$. For each  $\Delta_i$, $\{\e_0,\e_1,\cdots, \check{\e}_{s(i)},\cdots, \e_n\}$ are linearly independent in $\mathbb R^n$ and each  has weight 1. Thus
$$w_{\Delta_i}(k):=\#\{\w\in \w_i+C(\Delta_i,\Z_{\ge 0})\mid wt(\w)=k\} = \binom{n-1+k-wt(\w_i)}{k-wt(\w_i)}.$$

\begin{eqnarray*}
    H_{\Delta(HD)}(k)&\overset{\eqref{eq:m}}=&\sum_{i=0}^n (-1)^i\binom{n}{i}w_{\Delta(HD)}(k)\\
    &=&\sum_{i=0}^n (-1)^i\binom{n}{i}\sum_{i=0}^L w_{\Delta_i}(k)\\
    &=&\sum_{i=0}^L \sum _{i=0}^n (-1)^i\binom{n}{i}  \binom{n-1+k-wt(\w_i)}{k-wt(\w_i)}\\
    &\overset{\eqref{eq:eval}}=& \#\{\w_i\mid 0\le i\le L, wt(\w_i)=k\}.
\end{eqnarray*}
\end{proof}

The proof of Theorem \ref{thm:main} follows from putting together Propositions \ref{prop:wt-Phi} and \ref{prop:H-wt}.

\section{Another proof}\label{ss:another}

Let $\G=[p_1,\cdots,p_r,-q_1,\cdots,-q_s]$ be a Gamma vector. Below we modify the notation by letting  $p_{r-1+j}=q_j$ for $1\le j\le s$ with $p_{r+s-1}=1$. 
Set $n=r+s-1$. Let $\{\e_0,\ldots,\e_{n}\}$ as before. Note that 
$$
L=p_1+\cdots+p_r=q_1+\cdots+q_s
$$ is equivalent to the single relation satisfied by the vectors $\e_i,0\le i\le n.$
\begin{equation} \label{eq: relation}
p_1\e_1+\cdots+p_{r-1}\e_{r-1}=p_{r}\e_{r}+\cdots+p_{n}\e_{n}.
\end{equation}

We let $\Delta=\Delta(HD)$ denote the $n$-dimensional polyhedron with the origin $\mathbf o$
and $\e_j$, $j=0,\ldots,n$, as vertices. Furthermore,
for a nonempty subset $T$ of
$$
S:=\{0,1,\ldots,n\},
$$
we let $\Delta_T$ denote the 
$(|T|-1)$-dimensional polytope with vertices $\e_j$, where $j\in T$.
Also, we let $C(\Delta_T)$ denote the cone $C(\Delta_T):=\{\sum_{j\in
  T}a_j\e_j:a_j\ge 0\}$. The notation $C(\Delta)$ is similarly defined.
Recall that a nonzero vector $v$ in $C(\Delta_T)$ is said to have \emph{weight} $w$
if $w$ is the smallest real number such that $v\in w\Delta_T$.
Since all $\e_j$ are lying on the hyperplane $x_1=1$, the weight of $v$
is simply the first coordinate of $v$. In particular, even though a
vector $v$ may belong to several $C(\Delta_T)$, the weight of $v$
does not depend on $T$.

For a non-negative integer $k$, let $m(k;T)$ (respectively,
$m(k)$) denote the number of lattice points (points in $\Z^n$) of
weight $k$ in the cone $C(\Delta_T)$ (respectively,
$C(\Delta)$) and \begin{equation}\label{eq:g}
   g(x;T)=\sum_{k=0}^\infty m(k;T)x^k
\end{equation} (respectively,
$g(x)$) be the generating function of $m(k;T)$ (respectively, $m(k)$).

Let
$$
S_+=\{0,\ldots,r-1\}, \qquad
S_-=\{r,\ldots,r+s-1\}.
$$

\begin{Lemma}
  We have
  \begin{equation*}
    \begin{split}
      g(x)&=\sum_{k=0}^{r-1}(-1)^{r-1-k}\sum_{T\subset S_+,|T|=k}
      g(x;T\cup S_-) \\
      &=\sum_{k=0}^{s-1}(-1)^{s-1-k}\sum_{T\subset S_-,|T|=k}
      g(x;S_+\cup T).
    \end{split}
  \end{equation*}
\end{Lemma}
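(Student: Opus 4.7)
The plan is to establish the first identity by covering $C(\Delta)$ with simplicial subcones indexed by $S_+$ and applying inclusion-exclusion; the second identity will follow by the symmetric argument with the roles of $S_+$ and $S_-$ exchanged, since the relation \eqref{eq: relation} couples the two index sets with positive coefficients on each side.

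For each $i\in S_+$, set $C_i := C(\Delta_{(S_+\setminus\{i\})\cup S_-})$, and rewrite the single linear relation in the form $\sum_{i\in S_+} \tilde c_i\e_i = \sum_{j\in S_-}\tilde d_j \e_j$ with all $\tilde c_i,\tilde d_j>0$ (the coefficients from \eqref{eq: relation}, with $\e_0$ collected on the side of $S_+$). The covering claim $C(\Delta)=\bigcup_{i\in S_+}C_i$ will follow from a shifting argument: given $v=\sum_{i\in S_+}a_i\e_i+\sum_{j\in S_-}b_j\e_j$ with $a_i,b_j\ge 0$, pick $t^\star:=\min_{i\in S_+}a_i/\tilde c_i$, attained at some $i^\star$; then the rewrite $v=\sum_i(a_i-t^\star\tilde c_i)\e_i+\sum_j(b_j+t^\star\tilde d_j)\e_j$ has all coefficients nonnegative and places $v$ into $C_{i^\star}$.

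Next I will identify the intersections. For every nonempty $I\subseteq S_+$ I expect
$$\bigcap_{i\in I}C_i \;=\; C\bigl(\Delta_{(S_+\setminus I)\cup S_-}\bigr),$$
with $\supseteq$ immediate. For $\subseteq$, the key observation is that any two nonnegative representations of a lattice point $v\in C(\Delta)$ differ by a scalar multiple of the single relation, so the quantity $\operatorname{argmin}_{i\in S_+}(a_i/\tilde c_i)$ is an invariant of $v$. A direct check shows $v\in C_i$ iff $i$ lies in this argmin set, so membership in $\bigcap_{i\in I}C_i$ forces all $i\in I$ to attain the common minimum, whence the single shift $t=t^\star$ simultaneously zeroes the coefficients $a_i$ for every $i\in I$ and exhibits $v\in C(\Delta_{(S_+\setminus I)\cup S_-})$.

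Finally, because every $\e_j$ lies on the hyperplane $x_1=1$, the weight of a lattice point equals its first coordinate and is preserved by all the shifts above. Restricting to weight-$k$ lattice points and applying inclusion-exclusion yields
$$m(k)\;=\;\sum_{\varnothing\neq I\subseteq S_+}(-1)^{|I|+1}\,m\bigl(k;(S_+\setminus I)\cup S_-\bigr);$$
reparametrizing by $T:=S_+\setminus I$, so that $T$ runs over proper subsets of $S_+$ and $|I|=r-|T|$, and packaging into the generating function $g(x)=\sum_k m(k)x^k$, produces exactly the first stated identity; the mirror argument on $S_-$ delivers the second. The main obstacle will be the intersection step: the uniqueness up to scaling of the linear dependence among $\{\e_j\}$ is exactly what upgrades the \emph{individual} minimality conditions $i\in\operatorname{argmin}$ (one per $i\in I$) into a \emph{single} simultaneous shift valid for all $i\in I$ at once, and this is the point where one must resist the temptation to manipulate each $C_i$ separately.
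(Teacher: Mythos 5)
Your proof is correct and is essentially the paper's argument in different packaging: the paper establishes that each lattice point has a unique nonnegative representation whose $S_+$-support omits at least one index, and then counts each point via the binomial identity $\sum_{k=m}^{r-1}(-1)^{r-1-k}\binom{r-m}{k-m}=1$, which is precisely your inclusion--exclusion over the cover $\{C_i\}_{i\in S_+}$. Your ``argmin invariance'' step is the paper's uniqueness claim (both resting on the one-dimensionality of the relation space and the uniform sign of the $S_+$-coefficients in the relation, with $\e_0$ correctly placed on the $S_+$ side), so the two proofs coincide in substance.
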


\begin{proof} Here we prove only the first formula; the proof of the
  second formula is similar.
  
  We claim that each nonzero vector $v$ in $C(\Delta)$ has a unique
  expression $v=\sum_{j\in T}a_j\e_j$, $a_j>0$ for all $j\in T$, for
  some subset $T$ of $S$ such that at least one of $0,\ldots,r-1$ is
  missing from $T$.

  We first prove the existence of such an expression.
  Assume that $v$ is a nonzero vector in $C(\Delta)$. Then $v\in
  C(\Delta_{T'})$ for some subset $T'$ of $S$ of cardinality $\le n$,
  say, $v=\sum_{j\in T'}a_j\e_j$, $a_j>0$. If $S_+\not\subseteq T'$,
  then we are done. If $S_+\subseteq T'$, then setting
  $c=\min_{j\in S_+}a_j/p_j$ and using \eqref{eq: relation}, we may
  write $v$ as 
  $$
  v=\sum_{j\in S_+}(a_j-cp_j)\e_j+\sum_{j\in S_-}(a_j+cp_j)\e_j
  $$
  (we set $a_j=0$ in the second sum when $j\notin T'$).
  This is an expression for $v$ with the claimed property.
  The uniqueness follows immediately from the facts that any subset of
  $S$ of cardinality $\le n$ is linearly independent and that all the
  coefficients $b_j$ for $j\in S_+$ in a nontrivial relation
  $\sum_{j\in S}b_j\e_j=0$ have the same sign.

  Having proved the claim, for a nonzero vector $v$ we let
  $\operatorname{supp}(v)$ denote the set $T$ in the claim.
  We also set $\operatorname{supp}({\bf o})=\{\}$. We now prove the
  formula.

  Given a nonzero vector $v$ in $\Z^n\cap C(\Delta)$, we write its
  support as $\operatorname{supp}(v)=T_+\cup T_-$, where
  $T_\pm=S_\pm\cap\operatorname{supp}(v)$. Let $m=|T_+|$. We need to
  show that the vector $v$ is counted exactly once in the expression
  $$
  \sum_{k=0}^{r-1}(-1)^{r-1-k}\sum_{T\subset S_+,|T|=k}
      g(x;T\cup S_-).
  $$
  Indeed, the number of times $v$ is counted in the expression is    
  $$
  \sum_{k=m}^{r-1}(-1)^{r-1-k}\binom{r-m}{k-m}
  =(-1)^{r-m-1}\sum_{k=0}^{r-1-m}(-1)^k\binom{r-m}k=1.
  $$
  This proves the formula.
\end{proof}

We now determine the generating functions $g(x;T)$.

\begin{Lemma} \label{lemma: determinant}
  For $i\in S$, let $M$ be the $n\times n$ matrix whose
  rows are $v_j$, $j\in S$, $j\neq i$. Then $|\det M|=p_j$.
\end{Lemma}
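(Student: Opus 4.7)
My plan is to exploit the unique (up to scaling) linear relation $\sum_{j\in S}c_j\e_j=0$ satisfied by the $n+1$ vectors $\e_0,\dots,\e_n\in\R^n$, of which \eqref{eq: relation} is one rescaling. Writing $M_j$ for the minor obtained by deleting $\e_j$ (so $M=M_i$ in the lemma statement), the standard Laplace-expansion duality gives that $(-1)^{j}\det M_j$ and $c_j$ are proportional across $j\in S$:
\begin{equation*}
|\det M_j|=C\cdot|c_j|,\qquad j\in S,
\end{equation*}
for a single positive constant $C$ independent of $j$.

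I would first read off the $|c_j|$'s directly from the definitions in \eqref{eq:ei}. Comparing the $n-1$ non-first coordinates forces $c_i=-c_n p_i$ for $1\le i\le r-1$ and $c_{r-1+j}=c_n q_j$ for $1\le j\le s-1$, and the first-coordinate equation then yields $c_0=-c_n p_r$. Under the relabelling $p_{r-1+j}=q_j$ this gives $|c_j|=p_j$ for every $1\le j\le n$, exactly as required by the lemma.

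Finally, I would pin down the constant $C$ by computing one minor directly. Take $j=n$: the rows of $M_n$ are $\e_0,\e_1,\ldots,\e_{n-1}$, and subtracting $\e_0$ from each of the other rows converts $M_n$ into the identity matrix, so $|\det M_n|=1$. Since $|c_n|=p_n=q_s=1$ as well, we must have $C=1$, and thus $|\det M|=|c_i|=p_i$ whenever $\e_i$ is the deleted row. The only delicacy is notational bookkeeping---tracking signs $(-1)^j$ that eventually disappear and correctly identifying each $c_j$ with its gamma-vector entry under the overloaded indexing $p_{r-1+j}=q_j$; no substantive obstacle arises.
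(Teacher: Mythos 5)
Your proof is correct, but it takes a genuinely different route from the paper's. The paper computes each maximal minor directly: for each $i$ it row-reduces the matrix with $\e_i$ deleted until only one non-unit pivot remains, which comes out to $\pm p_i$ (and $\pm p_r$ in the case $i=0$). You instead use the cofactor--kernel duality: the vector $\bigl((-1)^j\det M_j\bigr)_{j\in S}$ is itself a left-kernel vector of the $(n+1)\times n$ matrix with rows $\e_0,\dots,\e_n$, hence proportional to the unique (up to scaling) relation among the $\e_j$, and a single easy minor ($\det M_n=1$, together with $q_s=1$) pins the constant. Your recomputation of the relation from \eqref{eq:ei} is correct and in fact necessary: the displayed relation \eqref{eq: relation} omits the $\e_0$ term, whose coefficient is $p_r$, and your $c_0=-c_np_r$ recovers exactly the paper's conclusion $|\det M|=p_r$ for $i=0$ (the lemma's ``$|\det M|=p_j$'' should be read as $p_i$ for $i\ge 1$ and $p_r$ for $i=0$). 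Two points worth making explicit: the proportionality argument needs the rows to have rank $n$, which is clear since $\e_0,\dots,\e_{n-1}$ form a basis of $\R^n$, and the constant $C$ is nonzero precisely because $\det M_n\neq 0$. Your approach buys a conceptual explanation --- the maximal minors \emph{are} the coefficients of the syzygy, which is why the $p_j$ appear --- at the price of invoking the standard Laplace-expansion fact; the paper's computation is more pedestrian but entirely self-contained and checks each case independently.
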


\begin{proof}
  If $i=0$, after applying suitable elementary row operations, we see
  that $\det M$ is equal to the determinant of the matrix formed by
  $\e_1,\ldots,\e_{n-1}$ and
  $(1-p_1-\ldots-p_{r-1}+p_{r}+\cdots+p_{n-1},0,\ldots,0)=(p_r,0,\ldots,0)$.
  Thus, $|\det M|=p_r$. Likewise, if $i=1,\ldots,n$, then $\det M$
  is equal to the determinant of the matrix formed by
  $\e_0,\ldots,\e_{i-1},\e_{i+1},\ldots,\e_{n-1}$, and a vector of the form
  $(\ast,0,\ldots,0,\pm p_i,0,\ldots,0)$. Therefore, $|\det M|=p_i$.
\end{proof}

\begin{Lemma} \label{lemma: denominator}
  Let $T$ be a proper subset of $S$. Assume that
  $v\in\Z^n\cap C(\Delta_T)$, say, $v=\sum_{j\in T}a_j\e_j$. Then
  $a_j\in\frac1{d_T}\Z$ for all $j\in T$, where
  $d_T=\gcd\{p_i:i\in S\setminus T\}$.
\end{Lemma}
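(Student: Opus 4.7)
The plan is to reduce everything to the extreme case $|T|=n$, where Cramer's rule applies directly, and then to treat a general proper $T$ by extending it to size $n$ in every possible way and intersecting the resulting denominator conditions. For the case $|T|=n$, the complement $S\setminus T$ is a single index $\{i\}$, so $d_T=p_i$. By Lemma \ref{lemma: determinant}, the $n\times n$ matrix $M$ whose rows are $\e_j$, $j\in T$, has $|\det M|=p_i>0$, so $\{\e_j:j\in T\}$ is a basis of $\R^n$. Cramer's rule applied to the unique expansion $v=\sum_{j\in T}a_j\e_j$ expresses each $a_j$ as a ratio of integer determinants with common denominator $\det M$, hence $a_j\in\tfrac{1}{p_i}\Z=\tfrac{1}{d_T}\Z$.

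For a general proper subset $T\subsetneq S$, for each $i\in S\setminus T$ set $T_i:=S\setminus\{i\}$; then $T\subseteq T_i$ and $|T_i|=n$. Padding with zeros, the expression $v=\sum_{j\in T}a_j\e_j$ exhibits $v$ in the basis $\{\e_j:j\in T_i\}$, and by uniqueness of that basis expansion together with the previous case applied to $T_i$, every coefficient---in particular every $a_j$ for $j\in T$---lies in $\tfrac{1}{p_i}\Z$. Running $i$ over $S\setminus T$ and intersecting yields
\[
a_j\ \in\ \bigcap_{i\in S\setminus T}\tfrac{1}{p_i}\Z\ =\ \tfrac{1}{\gcd\{p_i:i\in S\setminus T\}}\Z\ =\ \tfrac{1}{d_T}\Z,
\]
where the middle equality is the elementary observation that a rational $c/d$ in lowest terms satisfies $pc/d\in\Z$ iff $d\mid p$.

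The main obstacle here is essentially bookkeeping rather than genuine content. The substantive input is Lemma \ref{lemma: determinant}, which simultaneously guarantees that every $n$-subset of $\{\e_j\}_{j\in S}$ is linearly independent (so each $T_i$ really is a basis and the padded expansion is forced to agree with the original one on $T$) and delivers the exact determinant value $p_i$ (so Cramer's rule produces precisely the claimed denominator). Beyond invoking that lemma, the proof needs only the standard computation of $\bigcap_i\tfrac{1}{p_i}\Z$ in terms of $\gcd$.
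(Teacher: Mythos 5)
Your proof is correct and follows essentially the same route as the paper: reduce to the codimension-one case $T_i=S\setminus\{i\}$ via Lemma \ref{lemma: determinant} (the paper leaves the Cramer's rule step implicit), then intersect the conditions $a_j\in\frac1{p_i}\Z$ over all $i\in S\setminus T$ to get the $\gcd$. The only difference is that you spell out the two steps the paper states without elaboration.
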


\begin{proof}
  For $i=1,\ldots,n+1$, let $T_i=S\setminus\{i\}$. By Lemma
  \ref{lemma: determinant}, if $T=T_i$ for some $i$, then for
  $\sum_{j\in T_i}a_j\e_j\in\Z^n\cap C(\Delta_{T_i})$, we have
  $a_j\in\frac1{p_i}\Z$ for all $j$ and the statement holds.

  More generally, if $T$ is a proper subset of $S$ and $v=\sum_{j\in
    T}a_j\e_j$ is a lattice point contained in $C(\Delta_T)$, then $v$
  is contained in $T_i$ for every $i$ in $S\setminus T$. Then by the
  result above, $a_j\in\frac1{p_i}\Z$ for all $j\in T$ and all $i\in
  S\setminus T$. 
  Therefore, $a_j\in\frac1{d_T}\Z$ for all $j$.
\end{proof}

In the following, for a positive integer $d$ and an integer $m$, we
let $\{m\}_d$ denote the residue of $m$ modulo $d$.
Also, for a real number $x$, we let $\{x\}:=x-\lfloor
x\rfloor$ as before. For a rational number $t$, we define
$$
w(t):=\sum_{j\in S_+}\{-p_jt\}+\sum_{j\in S_-}\{p_jt\}.
$$
We remark that by the assumption that $\sum_{j\in S_+}p_j=\sum_{j\in
  S_-}p_j$, the function $w(t)$ is integer-valued.

\begin{Lemma} \label{lemma: f(x;T)}
  Let $T$ be a proper subset of $S$ and $d:=d_T$ be the
  integer defined in Lemma \ref{lemma: denominator}. Then we have
  $$
  g(x;T)=\frac{f(x;T)}{(1-x)^{|T|}},
  $$
  where
  \begin{equation} \label{eq: f(x;T)}
  f(x;T)=\sum_{c=0}^{d-1}x^{w(c/d)}.
  \end{equation}
\end{Lemma}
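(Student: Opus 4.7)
My approach is to realize $g(x;T)$ as the Ehrhart-style generating function of the simplicial cone $C(\Delta_T)$. Since $\{\e_j:j\in T\}$ is linearly independent and every $\e_j$ lies on the hyperplane $x_1=1$ (so has weight $1$), the standard simplicial-cone decomposition gives
$$
g(x;T)=\frac{f(x;T)}{(1-x)^{|T|}},\qquad f(x;T)=\sum_{\lambda\in\Z^n\cap P_T}x^{wt(\lambda)},
$$
where $P_T:=\{\sum_{j\in T}\alpha_j\e_j:\alpha_j\in[0,1)\}$ is the half-open fundamental parallelepiped. It therefore suffices to enumerate $\Z^n\cap P_T$ and to record the first coordinate of each lattice point.

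To enumerate, consider any $\lambda=\sum_{j\in T}\alpha_j\e_j\in\Z^n\cap P_T$ and set $t:=\alpha_n\in[0,1)$. If $n\notin T$, then $t=0$ and integrality forces all other $\alpha_i=0$, consistent with $d_T=1$ since $p_n=1\in S\setminus T$; only the origin occurs. For $n\in T$, integrality of the $(i+1)$-st coordinate of $\lambda$ forces $\alpha_i=\{-p_it\}$ when $i\in T\cap S_+$, $\alpha_i=\{p_it\}$ when $i\in T\cap S_-$, and the divisibility $p_it\in\Z$ when $i\notin T$ (for $i\in\{1,\ldots,n-1\}$), while the first-coordinate constraint uniquely pins down $\alpha_0\in[0,1)$. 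The divisibility conditions restrict $t$ to $\{c/d:0\le c<d\}$ with $d=d_T$; conversely, every such value of $t$ reconstructs a unique lattice point via the same formulas. Hence $c\mapsto\lambda_c$ is a bijection between $\{0,\ldots,d-1\}$ and $\Z^n\cap P_T$.

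For the weight, since every $\e_j$ lies on $x_1=1$ we have $wt(\lambda_c)=\sum_{j\in T}\alpha_j$. Substituting the explicit formulas for $\alpha_j$, using $\{p_nt\}=\{t\}=t$ (because $p_n=1$ and $t\in[0,1)$), and noting that the terms $\{-p_it\}$ or $\{p_it\}$ vanish for $i\notin T$ by the divisibility, the sum collapses exactly into the expression for $w(c/d)$. This yields $f(x;T)=\sum_{c=0}^{d-1}x^{w(c/d)}$ as claimed. The main subtle point is ensuring the first-coordinate integrality condition propagates correctly through the collapse: this reduces to a purely arithmetic identity relating $\alpha_0=\sum_{j\in T\setminus\{0\}}\alpha_j\bmod 1$ (within $[0,1)$) to the "missing" fractional-part terms in $w(t)$, for which the integer-valuedness of $w$ provided by $\sum_{j\in S_+}p_j=\sum_{j\in S_-}p_j$ is exactly what is needed.
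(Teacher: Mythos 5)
Your proof is correct and takes essentially the same route as the paper's: the paper's ``$T$-primitive elements'' are precisely the lattice points of your half-open fundamental parallelepiped, and both arguments parametrize them by the $\e_n$-coefficient $t=c/d$, derive the coordinates $\{-p_jt\}$, $\{p_jt\}$ from integrality, and collapse the weight to $w(c/d)$ using $d\mid p_j$ for $j\notin T$ together with the integer-valuedness of $w$. Nothing further is needed.
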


\begin{proof}
  We call an element $u$ of $\Z^n\cap C(\Delta_T)$
  to be $T$-primitive (or simply primitive) if $u-\e_j\notin\Z^n\cap C(\Delta)$ for any $j\in T$, so
  every element $v$ of $\Z^n\cap C(\Delta_T)$ can be written uniquely
  as $v=u+\sum_{j\in T}a_j\e_j$ for some $T$-primitive element $u$ and some
  non-negative integers $a_j$. Then we have
  $$
  g(x;T)=\frac1{(1-x)^{|T|}}\sum_{u\text{ $T$-primitive}}x^{\text{weight
      of }u}.
  $$
  Now we determine the $T$-primitive elements of $\Z^n\cap C(\Delta_T)$
  and prove the lemma.

  We first consider the case $n\notin T$. Since $p_{n}=1$, we have
  $d_T=1$. Thus, $f(x;T)=1$. On the other hand, by Lemma
  \ref{lemma: denominator}, $\bf o$ is the only $T$-primitive element, so
  $g(x;T)=1/(1-x)^{|T|}$ and the lemma is proved in this case.

  We next consider the case $0,n\in T$. Let $u=\sum_{j\in T}a_j\e_j$
  be a $T$-primitive element. By Lemma \ref{lemma: denominator},
  $da_j$ are integers for all $j\in T$. Let $c:=da_{n}$.
  The $T$-primitive condition implies that $0\le c\le d-1$. Furthermore,
  the assumption that $u\in\Z^n$ implies that
  $$
  \begin{cases}
    a_j+p_ja_{n}\in\Z, &\text{if }j\in T_+,j\neq 0,\\
    a_j-p_ja_{n}\in\Z, &\text{if }j\in T_-,
  \end{cases}
  $$
  and
  $$
  a_0+\sum_{j\in T,j\neq 0}a_j\in\Z.
  $$
  Thus,
  $$
  da_j=\begin{cases}
    \{-p_jc\}_d, &\text{if }j\in T_+,j\neq 0, \\
    \{p_jc\}_d, &\text{if }j\in T_-,
  \end{cases}
  $$
  and
  $$
  da_0=\left\{c\sum_{j\in T_+,j\neq 0}p_j
    -c\sum_{j\in T_-}p_j\right\}_d.
  $$
  This shows that every primitive element is uniquely determined by
  its coefficient of $\e_{n}$. It shows also that for each integer
  $c$ with $0\le c\le d-1$, there is a $T$-primitive element such that the
  coefficient of $v_{n}$ is $c/d$. We now compute their weights.
  
  From the computation above, we see that the weight of the $T$-primitive
  element $\sum_{j\in T}a_j\e_j$ with $a_{n+1}=c/d$ is equal to
  $$
    \sum_{j\in T}a_j=\frac1d\left(
      \left\{c\sum_{j\in T_+,j\neq1}p_j
        -c\sum_{j\in T_-}p_j\right\}_d+\sum_{j\in T_+,j\neq 0}
      \{-p_jc\}_d+\sum_{j\in T_-}\{p_jc\}_d\right).
  $$
  Observing that $d|p_j$ for any $j\notin T$, we may rewrite the
  expression above as
  $$
    \sum_{j\in T}a_j=\frac1d\left(
    \left\{c\sum_{j\in S_+,j\neq1}p_j
        -c\sum_{j\in S_-}p_j\right\}_d+\sum_{j\in S_+,j\neq 0}
      \{-p_jc\}_d+\sum_{j\in S_-}\{p_jc\}_d\right).
  $$
  Then from the assumption that $\sum_{j\in S_+}p_j=\sum_{j\in
    S_-}p_j$, we see that the expression can be simplified to
  $$
  \sum_{j\in T}a_j=\frac1d\left(\sum_{j\in S_+}\{-p_jc\}_d
    +\sum_{j\in S_-}\{p_jc\}_d\right)=w(c/d).
  $$
  Therefore,
  $$
  g(x;T)=\frac{\sum_{c=0}^{d-1}x^{w(c/d)}}{(1-x)^{|T|}}.
  $$
  This proves the lemma for the case $0,n\in T$.

  Finally, assume that $n\in T$ but $0\notin T$. As before, we find
  that for each integer $c$ with $0\le c\le d-1$, there is exactly a
  primitive element $u=\sum_{j\in T}a_j\e_j$ with $a_{n+1}=c/d$ and
  $$
  da_j=\begin{cases}
    \{-p_jc\}_d, &\text{if }j\in T_+, \\
    \{p_jc\}_d, &\text{if }j\in T_-.
  \end{cases}
  $$
  Then the weight of $u$ is equal to
  \begin{equation*}
    \begin{split}
      \sum_{j\in T}a_j
      &=\frac1d\left(\sum_{j\in T_+}\{-p_jc\}_d
        +\sum_{j\in T_-}\{p_jc\}_d\right) \\
      &=\frac1d\left(\sum_{j\in S_+}\{-p_jc\}_d
        +\sum_{j\in S_-}\{p_jc\}_d\right)=w(c/d).
    \end{split}
  \end{equation*}
  Again, we have $g(x;T)=f(x;T)/(1-x)^{|T|}$ with $f(x;T)$ given by
  \eqref{eq: f(x;T)}. This completes the proof of the lemma.
\end{proof}

\begin{cor}
  For a proper subset $T$ of $S$, let $f(x;T)$ be defined by
  \eqref{eq: f(x;T)}. Then the generating function
  $g(x)=\sum_{k=0}^\infty m(k)x^k$ for the number $m(k)$ of points of
  weight $k$ in $\Z^n\cap C(\Delta)$ is equal to $f(x)/(1-x)^n$, where
  \begin{equation} \label{eq: Hodge polynomial}
    \begin{split}
      f(x)&=\sum_{k=0}^{r-1}(-1)^{r-1-k}
      \sum_{T\subset S_+,|T|=k}f(x;T\cup S_-)(1-x)^{r-1-k} \\
      &=\sum_{k=0}^{s-1}(-1)^{s-1-k}
      \sum_{T\subset S_-,|T|=k}f(x;S_+\cup T)(1-x)^{s-1-k}.
    \end{split}
  \end{equation}
  Consequently, the Hodge number multiplicity $H_\Delta(k)$ is equal
  to the coefficient of $x^k$ in $f(x)$.
\end{cor}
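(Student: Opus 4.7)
My plan is to assemble the Corollary directly from the two preceding lemmas together with the definition of the Hodge multiplicities in \eqref{eq:m}. There is no new combinatorial input needed beyond some bookkeeping of generating functions; the work has already been done in Lemma~\ref{lemma: f(x;T)} and in the decomposition lemma that opens the section.

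First, I would substitute the closed form $g(x;T)=f(x;T)/(1-x)^{|T|}$ from Lemma~\ref{lemma: f(x;T)} into the first decomposition formula for $g(x)$. For $T\subset S_+$ with $|T|=k$, the set $T\cup S_-$ has cardinality $k+s$, so
\begin{equation*}
g(x;T\cup S_-)=\frac{f(x;T\cup S_-)}{(1-x)^{k+s}}.
\end{equation*}
Clearing the denominator to the common value $(1-x)^n=(1-x)^{r+s-1}$ immediately produces $g(x)=f(x)/(1-x)^n$ with $f(x)$ exactly as in the first line of \eqref{eq: Hodge polynomial}. The second line follows identically from the symmetric version of the decomposition lemma. (I should also note that each $f(x;T)$ is a polynomial, so the expression for $f(x)$ is a polynomial as well.)

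For the statement about Hodge numbers, I would set $m(k)=w_\Delta(k)$ for $k\ge 1$ and observe that $w_\Delta(0)=1=m(0)$, so $g(x)=\sum_{k\ge 0}w_\Delta(k)x^k$. The inductive definition \eqref{eq:m} is precisely a convolution with the binomial coefficients of $(1-x)^n$: since $\sum_{i=0}^n(-1)^i\binom{n}{i}x^i=(1-x)^n$, we obtain
\begin{equation*}
\sum_{k\ge 0}H_\Delta(k)\,x^k=(1-x)^n\,g(x)=f(x).
\end{equation*}
Reading off coefficients gives $H_\Delta(k)=[x^k]f(x)$, which is the Corollary.

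The main thing to watch for is the boundary term $H_\Delta(0)=1$: the definition \eqref{eq:m} uses $w_\Delta(0)=1$ (not $m(0)$ strictly), but these agree since the only weight-zero lattice point in $C(\Delta)$ is $\mathbf o$, so the generating-function identity above is valid on the nose. A secondary point to be careful about is the hypothesis $T\ne S$ in Lemma~\ref{lemma: f(x;T)}; this is fine here because $T\cup S_-$ is a proper subset of $S$ whenever $T\subsetneq S_+$, and the lone term with $|T|=r-1$ is $T=S_+$, which gives $T\cup S_-=S$ — but this case never appears either in \eqref{eq: Hodge polynomial} since the outer sum stops at $k=r-1$ and $T\subset S_+$ with $|T|=r-1$ is still a proper subset (missing one index of $S_+$). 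So the substitution is justified for every term appearing in the sum, and the proof is complete.
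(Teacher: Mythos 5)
Your proof is correct and follows exactly the route the paper intends (the paper states this Corollary without proof as an immediate consequence of the two preceding Lemmas): substitute $g(x;T\cup S_-)=f(x;T\cup S_-)/(1-x)^{|T|+s}$ into the decomposition Lemma, clear denominators to $(1-x)^n$, and observe that \eqref{eq:m} is the coefficient-wise form of multiplication by $(1-x)^n$. One cosmetic slip: in your final paragraph the phrase ``the lone term with $|T|=r-1$ is $T=S_+$'' is wrong since $|S_+|=r$, but you immediately correct yourself --- every $T$ appearing in the sum has $|T|\le r-1<|S_+|$, so $T\cup S_-$ is always a proper subset of $S$ and Lemma~\ref{lemma: f(x;T)} applies to every term.
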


\begin{example}
  Let $\G=[s,-1,\cdots,-1]$ with $r=1$, $p_1=s$, and $q_1=\cdots=q_{s}=1$, so $n=s$,
  $S_+=\{0\}$ and $S_-=\{1,\ldots,s\}$. This corresponds to the Dwork family. When $s=3$, as in Example \ref{eg:1} and when $s=4$, $f_{HD}(\l)=0$ is generically a K3 surface. According to the corollary,
  we have
  $$
  \sum_{k=0}^\infty m(k)x^k=\frac{f(x;S_-)}{(1-x)^s}.
  $$
  It is easy to see that primitive element of $C(\Delta_{S_-})$ are
  $$
  \frac cs(\e_1+\cdots+\e_{s})
  $$
  for $c=0,\ldots,s-1$. Thus,
  $$
  f(x;S_-)=1+x+\cdots+x^{s-1}
  $$
  Thus, $H_\Delta(k)=1$ for $k=0,\ldots,s-1$ and $H_\Delta(k)=0$ for
  $k\ge s$.
\end{example}

\begin{example}
  Let $\G=[6,-3,-2,-1]$ with $r=1$, $s=3$, and $n=3$. We
  have
  $$
  \sum_{k=0}^\infty m(k)x^k=\frac{f(x;S_-)}{(1-x)^3}.
  $$
  The primitive elements of $C(\Delta_{S_-})$ are
  $$
  \frac16\left(\{3c\}_6\e_1+\{2c\}_6\e_2+c\e_3\right),
  \qquad c=0,\ldots,5,
  $$
  whose weights are
  $$
  \begin{cases}
    0, &\text{if }c=0, \\
    1, &\text{if }c=1,2,3,4, \\
    2, &\text{if }c=5. \end{cases}
  $$
  Thus,
  $$
  \sum_{k=0}^\infty m(k)x^k=\frac{1+4x+x^2}{(1-x)^3}.
  $$
\end{example}

\begin{example}
  Let  $\G=[5,2,-6,-1]$ with $r=s=2$ and $n=3$. Then the polynomial
  $f(x)$ in $\sum_{k=0}^\infty m(k)x^k=f(x)/(1-x)^3$ is
  $$
  f(x)=f(x;\{1,2,3\})+f(x;\{1,2,4\})-f(x;\{1,2\})(1-x).
  $$
  By Lemma \ref{lemma: f(x;T)}, $f(x;\{1,2,3\})=f(x;\{1,2\})=1$,
  and
  $$
  f(x;\{1,2,4\})=\sum_{c=0}^5x^{w(c;\{1,2,4\})}=1+3x+2x^2.
  $$
  Thus,
  $$
  f(x)=1+(1+3x+2x^2)-(1-x)=1+4x+2x^2.
  $$
\end{example}

We now deduce an alternative expression for $f(x)$, from which we will
see that it is equal to the Hodge polynomial from the zigzag
algorithm. Before that, let us remark that since $w(t)$ is periodic of
period $1$, the polynomial $f(x;T)$ in \eqref{eq: f(x;T)} can also be
written as
$$
f(x;T)=\sum_{c=1}^dx^{w(c/d)}.
$$

We let $\sT$ denote the set
$$
  \sT:=\{t\in\Q:0<t\le 1, ~p_jt\in\Z\text{ for some }j\in S_+\}.
$$
For $t\in\sT$, we let
$$
T(t):=\{j\in S_+: p_jt\in\Z\}, \qquad
m(t):=|T(t)|.
$$
That is,
$$
\sT=\bigcup_{j\in S_+}\left\{\frac k{p_j}: k\in\Z, 1\le k\le p_j
  \right\},
$$
$T(t)$ is the set of indices $j$ such that $t$ is contained in
the set $\{k/p_j:k\in\Z,1\le k\le p_j\}$. 

Recall from \eqref{eq: Hodge polynomial} that
$$
  f(x)=\sum_{k=0}^{r-1}(-1)^{r-1-k}
    \sum_{T\subset S_+,|T|=k}f(x;T\cup S_-)(1-x)^{r-1-k},
$$
which is equal to
$$
\sum_{c/d\in\sT,(c,d)=1}x^{w(c/d)}
\sum_{T\subsetneq S_+,d|d_{T\cup S_-}}(x-1)^{r-1-|T|}.
$$
Now for $T\subset S_+$, we have $d|d_{T\cup S_-}$ if and only if
$T':=S\setminus(T\cup S_-)$ satisfies $T'\subseteq T(c/d)$. Thus,
\begin{equation} \label{eq: Hodge polynomial 1}
  \begin{split}
    f(x)
    &=\sum_{t\in\sT}x^{w(t)}\sum_{T'\subseteq T(t),T'\neq\emptyset}
    (x-1)^{|T'|-1} \\
    &=\sum_{t\in\sT}x^{w(t)}\sum_{j=1}^{m(t)}
    \binom{m(t)}j(x-1)^j \\
    &=\sum_{t\in\sT}x^{w(t)}(1+x+\cdots+x^{m(t)-1}).
  \end{split}
\end{equation}
This means that for each fraction $t$ in $\{k/p_j: 1\le
k\le p_j\}$, $j\in S_+$, if we let $m(t)$ denote the number of times
it appears in the sets, then $t$ contributes $m(t)$ terms continually
to $f(x)$, starting from the exponent $w(t)$ and ending at the
exponent $w(t)+m(t)-1$. Now we have
\begin{equation} \label{eq: w(t) temp}
  \begin{split}
    w(t)&=\sum_{j\in S_+}(-p_jt-\lfloor -p_jt\rfloor)
    +\sum_{j\in S_-}(p_jt-\lfloor p_jt\rfloor) \\
    &=-\sum_{j\in S_+}\lfloor-p_jt\rfloor
  -\sum_{j\in S_-}\lfloor p_jt\rfloor.
  \end{split}
\end{equation}
Using
$$
  \lfloor-p_jt\rfloor
  =-\#\left\{k\in\Z: 0<\frac k{p_j}<t\right\}-1
$$
and
$$
  \lfloor p_jt\rfloor
 =\#\left\{k\in\Z: 0<\frac k{p_j}\le t\right\},
$$
we find that
\begin{equation} \label{eq: w(t) 1}
w(t)=\sum_{j\in S_+}\#\left\{k\in\Z:0<\frac k{p_j}<t\right\}
-\sum_{j\in S_-}\#\left\{k\in\Z:0<\frac k{p_j}\le t\right\}+r.
\end{equation}
From this, we see that $f(x)$ is precisely equal to the polynomial
from the zigzag algorithm.

\section*{Acknowledgements}
Long is supported in part by the Simons Foundation grant  \#MP-TSM-00002492 and the LSU Micheal F. and Roberta Nesbit McDonald Professorship. She is grateful for the opportunity to visit Academia Sinica in Taiwan in 2023 during when this paper was written. Yang is supported by Grant 112-2115-M-002-002 of National Science and Technology Council of Taiwan (R.O.C.).

\end{document}